\def\bs{\expandafter\@gobble\string\\}
\def\lb{\expandafter\@gobble\string\{}
\def\rb{\expandafter\@gobble\string\}}
\def\@pdfauthor{C.V.Radhakrishnan}
\def\@pdftitle{elsarticle.cls -- A documentation}
\def\@pdfsubject{Document formatting with elsarticle.cls}
\def\@pdfkeywords{LaTeX, Elsevier Ltd, document class}
\DeclareRobustCommand{\LaTeX}{L\kern-.26em%
        {\sbox\z@ T%
         \vbox to\ht\z@{\hbox{\check@mathfonts
           \fontsize\sf@size\z@
           \math@fontsfalse\selectfont
          A\,}%
         \vss}%
        }%
     \kern-.15em%
    \TeX}
\newtheorem{theorem}{Theorem}
\newtheorem{proposition}[theorem]{Proposition}
\newtheorem{definition}[theorem]{Definition}
\newdefinition{rmk}{Remark}
\newdefinition{example}{Example}
\newcommand{\vertiii}[1]{{\left\vert\kern-0.25ex\left\vert\kern-0.25ex\left\vert #1 
    \right\vert\kern-0.25ex\right\vert\kern-0.25ex\right\vert}}
\begin{document}

\def\testa{This is a specimen document. }
\def\testc{\testa\testa\testa\testa}
\def\testb{\testc\testc\testc\testc\testc}
\long\def\test{\testb\par\testb\par\testb\par}


\title{A time--dependent FEM-BEM coupling method for fluid--structure interaction {in $3d$}}

\author[1,2]{Heiko Gimperlein\corref{cor1}}\ead{h.gimperlein@hw.ac.uk}
\author[3,4]{Ceyhun \"{O}zdemir\fnref{fn2}}\ead{oezdemir@ifam.uni-hannover.de}
\author[3]{Ernst P. Stephan} \ead{stephan@ifam.uni-hannover.de}
\cortext[cor1]{Corresponding author}
\fntext[fn2]{C.~\"{O}zdemir acknowledges support by the Avicenna foundation.}
\address[1]{Maxwell Institute for Mathematical Sciences and Department of Mathematics, Heriot--Watt University, Edinburgh, EH14 4AS, United Kingdom}
\address[2]{Institute for Mathematics, University of Paderborn, Warburger Str.~100, 33098 Paderborn, Germany}
\address[3]{Institute of Applied Mathematics, Leibniz University Hannover, 30167 Hannover, Germany}
\address[4]{Institute for Mechanics,  Graz University of Technology, 8010 Graz, Austria}
\date{\today}
\begin{abstract}
We consider the well-posedness and a priori error estimates of a $3d$ FEM-BEM coupling method for fluid-structure interaction in the time domain. For an elastic body immersed in a fluid, the exterior linear wave equation for the fluid is reduced to an integral equation on the boundary involving the Poincar\'{e}-Steklov operator. The resulting problem is solved using a Galerkin boundary element method in the time domain, coupled to a finite element method for the Lam\'{e} equation inside the elastic body. Based on ideas from the time--independent coupling formulation, we obtain an a priori  error estimate  and discuss the implementation of the proposed method. Numerical experiments illustrate the performance of our scheme for model problems.
\end{abstract}
\begin{keyword}
Fluid-structure interaction; FEM-BEM coupling; space-time methods; a priori error estimate; wave equation. 
\end{keyword}

\maketitle

\section{Introduction}

Coupled finite and boundary element procedures provide an efficient
and extensively investigated tool for the numerical solution of elliptic interface
and contact problems, particularly in unbounded domains \cite{gwinsteph, ency}. On the other
hand, much of the current interest in boundary element procedures focuses
on hyperbolic problems in the time domain, both on Galerkin methods and
convolution quadrature {\cite{aimi, costabel04,gimperleinreview, jr, MR3468871}.}\\

To compute the scattering of time-dependent waves by a bounded, penetrable obstacle, the coupling of time domain finite elements (FEM) and boundary elements (BEM) becomes relevant. The recent mathematical analysis of FEM-BEM coupling in the time domain was initiated in \cite{ajrt}, coupling discontinuous finite elements to Galerkin boundary elements for the $3d$ wave equation. A general analysis of the coupling between different discretizations for acoustic wave equations was provided in \cite{bls}, with a focus on convolution quadrature. Since then, FEM-BEM coupling for convolution quadrature methods has been applied in a variety of applications {in $2$ dimensions}, such as  fluid-structure and  fluid-thermoelastic problems, as well as nonlinear elastic problems involving piezoelectric scatterers \cite{MR3614885, MR3596550, thermo, svs}. For time-dependent Galerkin methods {and their application to $3d$ problems}, on the other hand, much less is known. In addition to \cite{ajrt}, previous related work  includes the  energy-based formulations  investigated by Aimi and collaborators for wave-wave coupling {in $3$d multidomains and layered media} \cite{aimi1, aimi2}. \\

In this article we study a simple space-time Galerkin FEM-BEM coupling method for  fluid-structure interaction, describing  the transient scattering of waves in an inviscid homogeneous fluid by an elastic obstacle. Based on ideas from time--independent coupling formulations  \cite{bmz, dsm} and the analysis in the frequency domain \cite{MR3614885}, we present a basic a priori  error estimate for a space-time Galerkin approximation in anisotropic Sobolev spaces \cite{BamHa}. {We discuss in detail the numerical implemention of our proposed coupling method in $3d$. Numerical experiments for model problems illustrate the performance of the scheme.} \\

To describe the results of this article in more detail, recall the equations for an elastic body submersed in a fluid. Let $\Omega \subset \mathbb{R}^3$ be a bounded Lipschitz domain, and $\Omega^c = \mathbb{R}^3 \backslash \overline{\Omega}$. The elastic deformation $\mathbf{u}$ in $\Omega$ is described by the Lam\'{e} operator $\Delta^{*} {\mathbf{u}}= \mu \Delta {\mathbf{u}} + (\lambda+\mu) \nabla(\mathrm{div}{\mathbf{u}})$, with Lam\'e constants $\mu \geq 0 $ and $\lambda$, such that $3 \lambda + 2 \mu \geq 0$. 
The deformation is coupled to the wave equation in $\Omega^c$, leading to the coupled interface problem
\begin{subequations}\label{eq:wholeproblematonce}
\textcolor{black}{\begin{equation}\label{eq:waveeqJ}
 \ddot{v} - c^2\Delta v = 0, \quad (x,t) \in \Omega^{c} \times (0,\infty),
\end{equation}
\begin{equation}\label{eq:lameeqJ}
\rho_{1} \ddot{\mathbf{u}} - \Delta^{*} {\mathbf{u}} = 0, \quad (x,t) \in \Omega \times (0,\infty),
\end{equation}    
\begin{equation}\label{eq:tranmissioncond1_vectorfield_J}
\rho_{2} \tilde{\sigma}({\mathbf{u}}) \cdot n + \dot{v} \cdot n = - \dot{v}^{inc} \cdot n \quad \text{ on } \Gamma \times (0,\infty),
\end{equation}
\begin{equation}\label{eq:tranmissioncond2_scalarfield_J}
\dot{\mathbf{u}} \cdot n + {\partial_n^{+} v } = - \partial_n^{+} v^{inc} \quad \text{ on } \Gamma \times (0,\infty), 
\end{equation}
\begin{equation}\label{eq:exteriorinitialcondJ}
v(x,0)=\dot{v}(x,0)=0 \quad \text{ in } \Omega^{c}, 
\end{equation}
\begin{equation}\label{eq:interiorinitialcondJ}
{\mathbf{u}}(x,0)=\dot{\mathbf{u}}(x,0)=0 \quad \text{ in } \Omega \ ,
\end{equation}}
\end{subequations}
\textcolor{black}{for a given incident wave $v^{inc}$ in $\Omega^{c}$.} 
Here, the stress is given in terms of the deformation  as $\tilde{\sigma}({\mathbf{u}}) = (\lambda \mathrm{div} {\mathbf{u}}) I + 2 \mu \varepsilon({\mathbf{u}})$, $\varepsilon({\mathbf{u}})=\frac{1}{2}((\nabla {\mathbf{u}}) +(\nabla {\mathbf{u}})^{T})$, with $I$ the identity matrix. \textcolor{black}{Time derivatives are denoted by a dot, and} $n$ is the outward-pointing unit normal vector to $\partial \Omega$. \textcolor{black}{The Neumann trace on $\Gamma$ from the exterior domain $\Omega^c$ is denoted by $\partial_{n}^{+}$, while $\partial_n^-$ is the corresponding Neumann trace from the  interior domain $\Omega$}. We choose units in which $c=\rho_1 = \rho_{2}=1$. \\

To solve this interface problem numerically,  we use the Poincar\'{e}-Steklov operator for the exterior wave equation to reformulate  it as a coupled domain / boundary integral equation in $\Omega$ and $\Gamma$. The \textcolor{black}{Poincar\'{e}}-Steklov operator is expressed in terms of layer potentials for the wave equation, as known for time-independent symmetric FEM-BEM coupling methods. The resulting space-time weak formulation is approximated using finite elements in $\Omega$ and Galerkin
boundary elements on $\Gamma$, based on tensor products of piecewise polynomial functions on a quasi-uniform
mesh in space and a uniform mesh in time. Our a priori estimates assure convergence. We discuss a numerical implementation in detail and study the numerical performance of the method.  \\

\noindent \emph{The article is organized as follows:} Section \ref{sec2} reformulates the coupled problem \eqref{eq:wholeproblematonce} as a domain / boundary integral equation in $\Omega$ and $\Gamma$ and discusses its discretization and well-posedness. It provides the basis for the derivation of an a priori error estimate in Section \ref{sec3}. Numerical examples and related algorithmic considerations are the content of  Section \ref{sec4}. Two appendices discuss boundary integral operators for the wave equation and the appropriate space-time anisotropic Sobolev spaces, as well as detailed algorithmic aspects of the proposed scheme. \\

\noindent \emph{Notation:} To simplify notation, we will write $f \lesssim g$, if there exists a constant $C>0$ independent of the arguments of the functions $f$ and $g$ such that $f \leq C g$. We will write $f \lesssim_\sigma g$, if $C$ may depend \textcolor{black}{on a parameter} $\sigma$.
\textcolor{black}{For a function $v$ on $\textbf{R}^n\setminus \Gamma$, we denote by $v^{\pm}|_{\Gamma}$ the trace of $v$ on $\Gamma$ from the exterior domain $\Omega^{c}$, resp.~from the interior domain $\Omega$}.

\section{Weak formulation and FEM-BEM coupling}\label{sec2}

Recall that the fluid-structure interaction problem  \eqref{eq:wholeproblematonce} is well-posed \cite{Filipe}:
\begin{theorem}\label{wellposed}
Let $\sigma >0$, $s \geq 0$ and assume that $v^{inc}|_\Gamma \in H_\sigma^{3+s}(\mathbb{R}^+, {H}^{\frac{1}{2}}(\Gamma))$, $\textcolor{black}{\partial_n^{+} v^{inc}} \in H_\sigma^{3+s}(\mathbb{R}^+, {H}^{-\frac{1}{2}}(\Gamma))$. Then the system \eqref{eq:wholeproblematonce} admits a unique solution $(\mathbf{u},v) \in H_\sigma^{1+s}(\mathbb{R}^+, {H}^{1}(\Omega)) \times  H_\sigma^{s}(\mathbb{R}^+, {H}^{1}(\Omega^c))$, which depends continuously on the data.
\end{theorem}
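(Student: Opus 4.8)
My plan is to reduce the coupled interface problem to a single evolution problem, either directly or via the Poincaré–Steklov operator, and then apply energy estimates together with the Laplace-transform (Bamberger–Ha Duong) technique in the weighted Sobolev spaces $H^s_\sigma(\mathbb{R}^+, X)$. First I would take the Laplace transform in time with parameter $s = \sigma + i\omega$, $\sigma > 0$, turning \eqref{eq:waveeqJ} into the exterior Helmholtz-type resolvent equation $s^2 \hat v - c^2 \Delta \hat v = 0$ in $\Omega^c$ and \eqref{eq:lameeqJ} into $s^2 \hat{\mathbf u} - \Delta^* \hat{\mathbf u} = 0$ in $\Omega$, while the transmission conditions \eqref{eq:tranmissioncond1_vectorfield_J}–\eqref{eq:tranmissioncond2_scalarfield_J} couple the traces of $\hat{\mathbf u}$ and $\hat v$ on $\Gamma$. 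The homogeneous initial conditions are exactly what is needed to make the Laplace transform apply with no boundary terms in time.

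The core of the argument is the frequency-domain a priori bound: I would establish that for each fixed $s$ with $\mathrm{Re}\, s = \sigma > 0$ the transformed problem has a unique solution, with a bound on $\|\hat{\mathbf u}(s)\|_{H^1(\Omega)} + \|\hat v(s)\|_{H^1(\Omega^c)}$ in terms of $\|\widehat{v^{inc}|_\Gamma}(s)\|_{H^{1/2}(\Gamma)} + \|\widehat{\partial_n^+ v^{inc}}(s)\|_{H^{-1/2}(\Gamma)}$ with a constant that is polynomial in $|s|$ and $1/\sigma$. The natural route is a variational formulation of the coupled Helmholtz/Lamé system on $\Omega$ (using a boundary integral representation — single and double layer potentials for the Helmholtz kernel — to eliminate the exterior unknown, i.e.\ the Poincaré–Steklov operator $S(s)$), and then a Gårding-type / coercivity estimate. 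Testing with the solution itself and taking real and imaginary parts of the resulting sesquilinear form, the dissipative structure of the transmission conditions (the terms $\dot v\cdot n$ and $\dot{\mathbf u}\cdot n$ are precisely the ones that make the coupled energy identity sign-definite) yields coercivity of the combined elasticity + Poincaré–Steklov form, with explicit $s$-dependence; positivity of $\mathrm{Re}(\bar s\, S(s))$ is the classical Bamberger–Ha Duong estimate for the wave single-layer operator. This delivers existence and uniqueness for each $s$ by Lax–Milgram, plus the needed growth bound.

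Finally I would invoke the Plancherel/Paley–Wiener characterization of $H^s_\sigma(\mathbb{R}^+, X)$ (spelled out in the appendix on anisotropic Sobolev spaces): a function lies in $H^{r}_\sigma(\mathbb{R}^+, X)$ iff its Laplace transform is holomorphic in $\mathrm{Re}\,s > \sigma$ and $\int_{\mathrm{Re}\,s=\sigma} |s|^{2r}\|\hat f(s)\|_X^2\, d\omega < \infty$. Multiplying the pointwise-in-$s$ bound by the appropriate power of $|s|$ and integrating over the line $\mathrm{Re}\,s = \sigma$ converts the polynomial loss in $|s|$ into a loss of a fixed number of time derivatives; with the incident data assumed in $H^{3+s}_\sigma$, three extra powers of $|s|$ are absorbed and one lands in $H^{1+s}_\sigma(\mathbb{R}^+, H^1(\Omega)) \times H^s_\sigma(\mathbb{R}^+, H^1(\Omega^c))$, which also gives the claimed continuous dependence on the data. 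Uniqueness for the time-domain problem follows from uniqueness for each $s$ together with injectivity of the Laplace transform.

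I expect the main obstacle to be the coercivity estimate for the coupled sesquilinear form with \emph{explicit} and \emph{optimal} control of the constants in $|s|$ and $1/\sigma$: one must combine the Lamé bilinear form (which is only coercive modulo rigid motions, so the $s^2$-mass term and Korn's inequality must be used carefully) with the Poincaré–Steklov operator $S(s)$ on the boundary, ensuring the cross terms coming from the two transmission conditions \eqref{eq:tranmissioncond1_vectorfield_J}–\eqref{eq:tranmissioncond2_scalarfield_J} combine to a coercive rather than merely Fredholm form — this is where the specific structure (and the factor $\rho_2$, here normalized to $1$) of the fluid–structure coupling is essential, and where tracking the $s$-powers determines how many time derivatives are lost, hence the exponent $3+s$ in the hypothesis.
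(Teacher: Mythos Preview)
The paper does not actually prove Theorem~\ref{wellposed}: it is stated as a known result and attributed to \cite{Filipe} (the line immediately preceding the theorem reads ``Recall that the fluid-structure interaction problem \eqref{eq:wholeproblematonce} is well-posed \cite{Filipe}''). So there is no proof in the paper to compare your proposal against.

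That said, your sketch is the standard route and essentially the one taken in the cited reference and in the related works \cite{MR3614885, MR3596550}: Laplace transform in time, a frequency-domain variational formulation with coercivity estimates carrying explicit polynomial dependence on $|s|$ and $\sigma^{-1}$, and then Plancherel on the line $\mathrm{Re}\,s=\sigma$ to transfer these into the anisotropic Sobolev spaces $H^r_\sigma(\mathbb{R}^+,X)$. One small imprecision: you write that ``positivity of $\mathrm{Re}(\bar s\, S(s))$ is the classical Bamberger--Ha~Duong estimate for the wave single-layer operator,'' but $S(s)$ here is the exterior Dirichlet-to-Neumann (Poincar\'e--Steklov) operator, not the single-layer operator $V$; the required sign property is rather that $\mathrm{Re}\langle \bar s\, S(s)\phi,\phi\rangle \gtrsim \sigma\|\phi\|_{1/2}^2$, which one obtains either from the representation $S=W-(K'-\tfrac12)V^{-1}(K-\tfrac12)$ together with the Bamberger--Ha~Duong bounds for $V$ and $W$, or directly from an energy identity for the exterior Helmholtz problem. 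Your identification of the main obstacle---tracking the $|s|$-powers through the combined Lam\'e/Steklov coercivity so as to land on the stated regularity indices---is accurate, and the exponent $3+s$ in the hypothesis reflects exactly this bookkeeping.
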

\textcolor{black}{See Appendix A for the definitions of the relevant space-time anisotropic Sobolev spaces, depending on a weight parameter $\sigma>0$.}\\

In this section we reformulate problem \eqref{eq:wholeproblematonce} as a coupled domain / boundary integral equation and propose a finite element / boundary element coupling method for its numerical solution.

 The key ingredient to reduce equation \eqref{eq:waveeqJ} in $\Omega^c$ to $\Gamma$ is the retarded Poincar\'{e}-Steklov operator $\mathcal{S}$ on $\Gamma$, defined as 
$\mathcal{S} v^{\textcolor{black}{+}}|_\Gamma = \textcolor{black}{{\partial_{n}^{+} v}}$ for a solution $v$ of \eqref{eq:waveeqJ}.

Using $\mathcal{S}$, equation \eqref{eq:tranmissioncond2_scalarfield_J}  becomes:

\textcolor{black}{\begin{equation} \label{transmS}
	\textstyle -\dot{\mathbf{u}} \cdot n - \mathcal{S} v^{\textcolor{black}{+}}|_\Gamma = {\partial_n^{+} v^{inc}} \ \ \ \text{ on } \Gamma.
\end{equation}}

To compute the Poincar\'{e}-Steklov operator $\mathcal{S}$, we use the following formulation in terms of  boundary integral operators \cite{gwinsteph}, see Appendix A:
\begin{equation*}
\mathcal{S} v^{\textcolor{black}{+}}|_\Gamma = W v^{\textcolor{black}{+}}|_\Gamma \textcolor{black}{-} \textstyle{(K^{T}-\frac{1}{2})V^{-1}(K-\frac{1}{2} )} v^{\textcolor{black}{+}}|_\Gamma \ .
 \end{equation*}
In terms of $\phi=v^{\textcolor{black}{+}}|_\Gamma$ and an auxiliary variable $\lambda = V^{-1}(K-\frac{1}{2})\phi$, \eqref{transmS} becomes 


\textcolor{black}{\begin{equation*}
 \textstyle{- \dot{\mathbf{u}} \cdot n } - W \phi + \textstyle{(K^{T}-\frac{1}{2})} \lambda = \partial_{n}^{+} v^{inc} \ . \end{equation*}}

Problem \eqref{eq:wholeproblematonce} is therefore equivalent to the following system on $\Omega$ and $\Gamma$:
\begin{subequations}\label{allinbio}
\textcolor{black}{
\begin{equation}\label{eq:lameeq2J}
\textstyle \ddot{\mathbf{u}} - \Delta^{*} {\mathbf{u}} = 0, \quad (x,t) \in \Omega \times (0,\infty),
\end{equation}    
\begin{equation}\label{eq:tranmissioncond1_vectorfield2_J}
\textstyle \tilde{\sigma}(\mathbf{u}) \cdot n + \dot{\phi} \cdot n = - \dot{v}^{inc} \cdot n \quad \text{ on } \Gamma \times (0,\infty),  
\end{equation}
\begin{equation}\label{eq:tranmissioncond2_scalarfield2_J}
\textstyle - \dot{\mathbf{u}} \cdot n - W \phi + {\textstyle (K^{T}-\frac{1}{2})} \lambda =  {\partial_{n}^{+}}v^{inc} \quad \text{ on } \Gamma \times (0,\infty) ,
\end{equation}
\begin{equation}\label{eq:representationeq_J}
 {\textstyle (\frac{1}{2} - K)} \phi + V \lambda = 0  \quad \text{ on } \Gamma \times (0,\infty),
\end{equation}
\begin{equation}\label{eq:exteriorinitialrepresentcondJ}
\phi(x,0)=\dot{\phi}(x,0)=\lambda(x,0)=0  \quad \text{ on } \Gamma ,
\end{equation}
\begin{equation}\label{eq:interiorinitialcondJ}
{\mathbf{u}}(x,0)=\dot{\mathbf{u}}(x,0)=0 \quad \text{ in } \Omega.
\end{equation}
}
\end{subequations}
The solution $v$ in $\Omega^c$ is recovered from the representation formula $v = D\phi - S \lambda$, using the single and double layer potentials from Appendix A.\\

\textcolor{black}{We derive a weak formulation of \eqref{allinbio} in the weighted $L^2$-Sobolev spaces from Theorem \ref{wellposed}. Recall the weighted $L^2$ inner products for given $\sigma>0$:
\begin{equation*}
 (\mathbf{u},\mathbf{v})_{\Omega\times\mathbb{R}^{+},\sigma} := \int_{0}^{\infty} e^{-2 \sigma t} \int_{\Omega} \mathbf{u} \cdot \mathbf{v} dx dt \text{ and } \langle\mathbf{u},\mathbf{v}\rangle_{\Gamma\times\mathbb{R}^{+},\sigma} := \int_{0}^{\infty} e^{-2 \sigma t} \int_{\Gamma} \mathbf{u} \cdot \mathbf{v} ds_x dt .
\end{equation*}}
Given a smooth solution $(\mathbf{u}, \phi, \lambda)$ of \eqref{allinbio}, \textcolor{black}{equations \eqref{eq:lameeq2J} and \eqref{eq:tranmissioncond1_vectorfield2_J} combined} with Betti's formula,
\begin{equation*}
  \langle \tilde{\sigma}(\mathbf{u}) \cdot n, \mathbf{w}|_\Gamma \rangle_{\Gamma \times \mathbb{R}^{+},\sigma} = (\tilde{\sigma}({\mathbf{u}}),\varepsilon({\mathbf{w}}))_{\Omega\times \mathbb{R}^{+},\sigma} + (\Delta^{*} {\mathbf{u}} , {\mathbf{w}})_{\Omega\times \mathbb{R}^{+},\sigma}\ , 
\end{equation*}
lead to
\begin{equation}\label{eq:fsi_interior}
  (\tilde{\sigma}({\mathbf{u}}),\varepsilon(\dot{\mathbf{w}}))_{\Omega\times \mathbb{R}^{+},\sigma} + (\ddot{{\mathbf{u}}}, \dot{\mathbf{w}})_{\Omega \times \mathbb{R}^{+},\sigma}+ \langle \dot{\phi} ,  \dot{\mathbf{w}}|_\Gamma \cdot n \rangle_{\Gamma\times \mathbb{R}^{+},\sigma} = - \langle \dot{v}^{inc} ,  \dot{\mathbf{w}}|_\Gamma \cdot n \rangle_{\Gamma\times \mathbb{R}^{+},\sigma} \ .
\end{equation}
The first two terms on the left hand side define a bilinear form
\begin{align*}
 a({\mathbf{u}},\dot{\mathbf{w}}) := (\tilde{\sigma}({\mathbf{u}}),\varepsilon(\dot{\mathbf{w}}))_{\Omega \times \mathbb{R}^{+},\sigma} + (\ddot{{\mathbf{u}}}, \dot{\mathbf{w}})_{\Omega \times \mathbb{R}^{+},\sigma}\ .
\end{align*}


Adding \eqref{eq:fsi_interior} and the weak formulations of \eqref{eq:tranmissioncond2_scalarfield2_J} and  \eqref{eq:representationeq_J}, one obtains  a weak formulation of \eqref{allinbio} in $\textcolor{black}{\widetilde{X}} := H_{\sigma}^{1}(\mathbb{R}^{+},H^{1}(\Omega))^{3} \times  H_{\sigma}^{1}(\mathbb{R}^{+},H^{1/2}(\Gamma)) \times H_{\sigma}^{1}(\mathbb{R}^{+},H^{-1/2}(\Gamma))$:\\ 

\noindent Find $(\mathbf{u},\phi,\lambda) \in \widetilde{X}$ such that for all $({\mathbf{w}},w,m) \in \widetilde{X}$
\begin{flalign}
 &a(\mathbf{u},\dot{\mathbf{w}}) + \langle \dot{\phi}, \dot{\mathbf{w}}|_\Gamma \cdot n \rangle_{\Gamma\times \mathbb{R}^{+}\!\!,\sigma} \!-\! \langle \dot{{\mathbf{u}}}|_\Gamma \cdot n , \dot{w} \rangle_{\Gamma\times \mathbb{R}^{+}\!\!,\sigma} \!-\! \langle W \phi , \dot{w} \rangle_{\Gamma\times \mathbb{R}^{+}\!\!,\sigma} \nonumber
 \\ &\qquad - \textstyle \langle (\frac{1}{2} I - K^{T}) \lambda , \dot{w} \rangle_{\Gamma\times \mathbb{R}^{+}\!\!,\sigma} + \!\textstyle \langle (\frac{1}{2} I - K) \phi , \dot{m} \rangle_{\Gamma\times \mathbb{R}^{+}\!\!,\sigma} \!+\! \langle V \lambda , \dot{m} \rangle_{\!\Gamma\times \mathbb{R}^{+}\!\!,\sigma} \label{eq:FSIsolkap4}\\&\qquad \qquad =\! - \langle \dot{v}^{inc} \cdot n ,  \dot{\mathbf{w}}|_\Gamma \rangle_{\Gamma\times \mathbb{R}^{+}\!\!,\sigma} \!+\! \langle \partial_{n}^{+} v^{inc}, \dot{w} \rangle_{\Gamma\times \mathbb{R}^{+}\!\!,\sigma} \ .\nonumber
\end{flalign}

Let $Z_{h,(\!\triangle t)}=V_{h,\Delta t}^{1,2}(\Omega)^3\!\!\subset\! H_{\sigma}^{1}(\!\mathbb{R}_{+}, H^{1}(\!\Omega\!)\!)^{3}$, $Y_{h,(\!\triangle t)}= V_{h,\Delta t}^{1,2}\!\!\subset\!H_{\sigma}^{1}(\!\mathbb{R}_{+}, H^{1/2}(\!\Gamma)\!)$, $X_{h,(\triangle t)}= V_{h,\Delta t}^{0,1} \textcolor{black}{\subset} H_{\sigma}^{1}(\!\mathbb{R}_{+}, H^{-1/2}(\!\Gamma)\!)$ be the conforming discretization spaces from \ref{SECT:retintop} in $\Omega$, resp.~$\Gamma$, based on tensor products of piecewise polynomial functions on a quasi-uniform
mesh in space and a uniform mesh in time.  Let $\widetilde{X}_{h,(\Delta t)}:=Z_{h,(\!\triangle t)} \times Y_{h,(\!\triangle t)} \times X_{h,(\triangle t)}$. \textcolor{black}{Note that the discretization order is higher in time than in space, in order to be  conforming. This corresponds to the loss of one time derivative for the boundary integral operators in Theorem 6, a well-known sub-optimal aspect of the standard functional analytic framework of space-time anisotropic Sobolev spaces. }\\

Then the discrete formulation reads:\\

\noindent Find $(\mathbf{u},\phi,\lambda) \in \widetilde{X}_{h,(\Delta t)}$ such that for all $({\mathbf{w}},w,m) \in \widetilde{X}_{h,(\Delta t)}$
\begin{flalign}
 &a(\mathbf{u},\dot{\mathbf{w}}) + \langle \dot{\phi}, \dot{\mathbf{w}}|_\Gamma \cdot n \rangle_{\Gamma\times \mathbb{R}^{+}\!\!,\sigma} \!-\! \langle \dot{{\mathbf{u}}}|_\Gamma \cdot n , \dot{w} \rangle_{\Gamma\times \mathbb{R}^{+}\!\!,\sigma} \!-\! \langle W \phi , \dot{w} \rangle_{\Gamma\times \mathbb{R}^{+}\!\!,\sigma} \nonumber
 \\ &\qquad - \textstyle \langle (\frac{1}{2} I - K^{T}) \lambda , \dot{w} \rangle_{\Gamma\times \mathbb{R}^{+}\!\!,\sigma} + \!\textstyle \langle (\frac{1}{2} I - K) \phi , \dot{m} \rangle_{\Gamma\times \mathbb{R}^{+}\!\!,\sigma} \!+\! \langle V \lambda , \dot{m} \rangle_{\Gamma\times \mathbb{R}^{+}\!\!,\sigma} \label{eq:FSIh}\\&\qquad \qquad =\! - \langle \dot{v}^{inc} \cdot n ,  \dot{\mathbf{w}}|_\Gamma \rangle_{\Gamma\times \mathbb{R}^{+}\!\!,\sigma} \!+\! \langle \partial_{n}^{+} v^{inc}, \dot{w} \rangle_{\Gamma\times \mathbb{R}^{+}\!\!,\sigma} \ .\nonumber
\end{flalign}
 
\textcolor{black}{Practical computations use $\sigma=0$.  See \cite{jr} for a detailed analysis of the role of the weight $\sigma$.\\}

In order to prove the well-posedness of the discrete formulation, we show the equivalence to a coercive formulation.
\begin{proposition}\label{prop:FSISayas41}
Let $({\mathbf{u}},\phi,\lambda) \in \widetilde{X}_{h,(\Delta t)}$ be a solution to the weak formulation \eqref{eq:FSIh}. 
Then with 
\begin{equation}\label{prop1eq4singledouble}
 v= D \phi - S \lambda\ , 
\end{equation}
 $({\mathbf{u}},v) \in Z_{h,(\triangle t)} \times H_{\sigma}^{1}(\mathbb{R}_{+}, H^{1}(\mathbb{R}^{3}\backslash \Gamma))$ satisfies for all $(\mathbf{w},w,m) \in \widetilde{X}_{h,(\Delta t)}$:
\begin{subequations}\label{eq:prop12equivtrans}
\begin{equation}\label{prop1eq5}
\textcolor{black}{\textstyle a({\mathbf{u}},{\dot{\mathbf{w}}}) + \langle  \lsem \dot{v}|_\Gamma \rsem  + \dot{v}^{inc} , \dot{\mathbf{w}}|_\Gamma \cdot n  \rangle_{\Gamma\times \mathbb{R}^{+}\!,\sigma} = 0 }
\end{equation}
\begin{equation}\label{prop1eq6}
\textstyle - \Delta v +\ddot v = 0 \quad \text{in} \ \mathbb{R}^{3} \backslash \Gamma
\end{equation}
\begin{equation}\label{prop1eq7}
 \lsem  v|_\Gamma \rsem \in Y_{h,(\triangle t)}, \ \lsem \partial_n v \rsem \in X_{h,(\triangle t)}
\end{equation}
\begin{equation}\label{prop1eq9}
\textcolor{black}{
\textstyle
-\langle  \dot{{\mathbf{u}}}|_\Gamma \cdot n , \dot{w} \rangle_{\Gamma \times \mathbb{R}^{+}\!,\sigma} - \langle \partial_n^{+} v , \dot{w} \rangle_{\Gamma \times \mathbb{R}^{+}\!,\sigma} = \langle {\partial_n^{+}}v^{inc} , \dot{w} \rangle_{\Gamma \times \mathbb{R}^{+}\!,\sigma}
} 
\end{equation}
\begin{equation}\label{prop1eq10}
\langle v^{\textcolor{black}{-}}|_\Gamma , \dot{m} \rangle_{\Gamma \times \mathbb{R}^{+}\!,\sigma} = 0 \quad \forall m \in X_{h,(\triangle t)} 
\end{equation}
\end{subequations}
where $ \lsem v|_\Gamma \rsem$, $ \lsem \partial_n v \rsem$ denote the jump of $v$, resp.~$\partial_n v$ across $\Gamma$.\\
Conversely, if $({\mathbf{u}},\phi,\lambda) = ({\mathbf{u}}, \lsem v|_\Gamma \rsem ,  \lsem \partial_n v \rsem)\in  \widetilde{X}_{h,(\triangle t)}$ satisfies \eqref{eq:prop12equivtrans} then the weak formulation \eqref{eq:FSIh} and \eqref{prop1eq4singledouble} hold.
\end{proposition}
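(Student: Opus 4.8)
The plan is to prove the two implications by going back and forth between the boundary densities $(\phi,\lambda)$ and the single field $v=D\phi-S\lambda$ on $\mathbb{R}^{3}\setminus\Gamma$, using only the jump relations, trace identities, and mapping properties of the retarded single and double layer potentials from Appendix~A, together with the Poincar\'e--Steklov identity $\partial_{n}^{+}(D\phi-S\lambda)=W\phi-(K^{T}-\tfrac12)\lambda$ that already underlies the reduction \eqref{allinbio}. The variable $\mathbf{u}$ never changes, since it enters both formulations only through $a(\mathbf{u},\dot{\mathbf{w}})$ and $\langle\dot{\mathbf{u}}|_{\Gamma}\cdot n,\dot w\rangle$, so the whole argument concerns the boundary part.

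For the implication \eqref{eq:FSIh} $\Rightarrow$ \eqref{eq:prop12equivtrans}, I would start from a discrete solution $(\mathbf{u},\phi,\lambda)$, set $v:=D\phi-S\lambda$, and observe: (i) as retarded potentials of densities supported in $t\ge0$ (by \eqref{eq:exteriorinitialrepresentcondJ}), $D\phi$ and $S\lambda$ solve the homogeneous wave equation in $\mathbb{R}^{3}\setminus\Gamma$ with vanishing Cauchy data, giving \eqref{prop1eq6}, while the mapping properties of the potentials together with the extra time regularity of the discrete densities place $v$ in $H^{1}_{\sigma}(\mathbb{R}_{+},H^{1}(\mathbb{R}^{3}\setminus\Gamma))$; (ii) the jump relations give $\lsem v|_{\Gamma}\rsem=\phi$ and $\lsem\partial_{n}v\rsem=\lambda$, which is \eqref{prop1eq7}. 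Then I would split \eqref{eq:FSIh} into its three test-component equations: taking $w=m=0$ and writing $\dot\phi=\lsem\dot v|_{\Gamma}\rsem$ produces \eqref{prop1eq5}; taking $\mathbf{w}=m=0$ and substituting $W\phi-(K^{T}-\tfrac12)\lambda=\partial_{n}^{+}v$ collapses the three boundary-operator terms to $-\langle\partial_{n}^{+}v,\dot w\rangle$ and yields \eqref{prop1eq9}; and taking $\mathbf{w}=w=0$, after identifying $v^{-}|_{\Gamma}$ with a fixed multiple of $(\tfrac12 I-K)\phi+V\lambda$ from the interior trace relations, turns \eqref{eq:representationeq_J} into \eqref{prop1eq10}.

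For the converse, I would set $\phi:=\lsem v|_{\Gamma}\rsem$, $\lambda:=\lsem\partial_{n}v\rsem$, note that these lie in $Y_{h,(\triangle t)}$, resp.~$X_{h,(\triangle t)}$ by \eqref{prop1eq7} so that $(\mathbf{u},\phi,\lambda)\in\widetilde{X}_{h,(\triangle t)}$, and then \emph{recover} the representation $v=D\phi-S\lambda$, i.e.~\eqref{prop1eq4singledouble}, from the Kirchhoff representation formula: since $v$ solves the wave equation in $\mathbb{R}^{3}\setminus\Gamma$ by \eqref{prop1eq6} and lives in a weighted space supported in $t\ge0$, the Green representation applied in $\Omega$ and in $\Omega^{c}$ and then added collapses to $D\lsem v|_{\Gamma}\rsem-S\lsem\partial_{n}v\rsem$. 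Once this is in hand, all the trace and jump identities of the first part run backwards: \eqref{prop1eq5} is the $w=m=0$ component of \eqref{eq:FSIh}; rewriting $\partial_{n}^{+}v$ in \eqref{prop1eq9} via the Poincar\'e--Steklov identity recovers the $\mathbf{w}=m=0$ component; and expressing $v^{-}|_{\Gamma}$ through $(\tfrac12 I-K)\phi+V\lambda$ converts \eqref{prop1eq10} into the $\mathbf{w}=w=0$ component. Summing gives \eqref{eq:FSIh}.

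The step I expect to be the main obstacle is this last use of the representation formula. One has to justify that a field satisfying only \eqref{prop1eq6} in the space-time anisotropic Sobolev framework is causal and ``outgoing'' enough for its Green representation to carry neither a volume term nor a contribution from infinity, so that it really does reduce to $D\lsem v|_{\Gamma}\rsem-S\lsem\partial_{n}v\rsem$; here the support condition $t\ge0$ built into the weighted spaces, combined with the light-cone support of the retarded fundamental solution, is the decisive ingredient, and some care with trace regularity is needed to make the boundary pairings well defined. The remaining work — the explicit jump and trace formulas for $V,K,K^{T},W$ and the attendant sign bookkeeping — is routine given Appendix~A.
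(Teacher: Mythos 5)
Your proposal is correct and follows essentially the same route as the paper's own proof: define $v=D\phi-S\lambda$, use the jump relations to get $\lsem v|_\Gamma\rsem=\phi$, $\lsem\partial_n v\rsem=\lambda$ and the trace identities for $v^{\pm}|_\Gamma$, $\partial_n^{+}v$, split \eqref{eq:FSIh} into its three test-function components, and recover \eqref{prop1eq4singledouble} in the converse via the Kirchhoff representation formula. The point you flag as the main obstacle (causality/validity of the representation formula in the weighted space setting) is exactly the step the paper also takes for granted, so no genuinely different argument is involved.
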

\begin{proof}
Let $({\mathbf{u}},\phi,\lambda) \in \widetilde{X}_{h,(\triangle t)}$  fulfill the weak formulation \eqref{eq:FSIh}. Setting $v=D \phi - S \lambda$, the wave equation \eqref{prop1eq6} holds outside $\Gamma$.
Going onto the boundary with the jump relations \textcolor{black}{(see \ref{SECT:retintop})} 
\begin{align*}
v^+|_\Gamma &= (D \phi)^+|_\Gamma - (S \lambda)^+|_\Gamma = \textstyle (\frac{1}{2} I + K) \phi - V \lambda \ , \\
v^-|_\Gamma &= (D \phi)^-|_\Gamma -(S \lambda)^-|_\Gamma = \textstyle (-\frac{1}{2} I + K) \phi - V \lambda \ ,
\end{align*}
we obtain
\begin{equation}\label{eq:proofofprop1eq1}
  \lsem  v|_\Gamma \rsem =v^+|_\Gamma - v^-|_\Gamma =\phi \in Y_{h,(\triangle t)}\ ,
\end{equation}
and therefore the first assertion in \eqref{prop1eq7}.
\textcolor{black}{From \eqref{eq:FSIh} we see that 
\begin{equation}\label{erklarung8e}
\textstyle a({\mathbf{u}},{\dot{\mathbf{w}}}) + \langle \dot{\phi},\dot{\mathbf{w}}|_\Gamma \cdot n \rangle = - \langle \dot{v}^{inc}\cdot n  , \dot{\mathbf{w}}|_\Gamma  \rangle_{\Gamma\times \mathbb{R}^{+}\!,\sigma}
\end{equation}
and also \begin{equation*}
\langle v^{-}|_{\Gamma}, \dot{m} \rangle_{\Gamma\times\mathbb{R}^{+}} = \langle (\tfrac{1}{2} I -K)\phi , \dot{m} \rangle_{\Gamma\times\mathbb{R}^{+}} + \langle V \lambda , \dot{m} \rangle_{\Gamma\times\mathbb{R}^{+}} = 0 \ .
\end{equation*}
Equation \eqref{prop1eq10} follows.}
\textcolor{black}{Using \eqref{eq:proofofprop1eq1},  
we also obtain} \eqref{prop1eq5}. 

The jump relations give for $v$ in \eqref{prop1eq4singledouble} :
\begin{align*}
 \partial_n^{+} v &= \partial_n^{+}(D \phi) - \partial_n^{+}(S \lambda) = W \phi - K^{T} \lambda +{\textstyle \frac{1}{2}} I \lambda \ , \\
 \partial_n^{-} v &= \partial_n^{-}(D \phi) - \partial_n^{-}(S \lambda) = W \phi - K^{T} \lambda -{\textstyle \frac{1}{2}} I \lambda \ .
\end{align*}
Hence
\begin{equation*}
  \lsem \partial_n v \rsem = \partial_n^{+} v - \partial_n^{-} v = 
  \lambda\ , 
\end{equation*}
and thus $ \lsem \partial_n v \rsem \in X_{h,(\triangle t)} $, which establishes \eqref{prop1eq7}.

Finally, from  \eqref{eq:FSIh} with
\begin{equation*}
 - \partial_n^{+} v = - W \phi + K^{T} \lambda -{\textstyle \frac{1}{2}} I \lambda \ ,
\end{equation*}
\eqref{prop1eq9} holds.
Altogether \eqref{eq:prop12equivtrans} holds.\\ \ \\
To show the converse direction, we define $({\mathbf{u}},\phi,\lambda) := ({\mathbf{u}}, \lsem v|_\Gamma \rsem , \lsem \partial_n v \rsem) \in Z_{h,(\triangle t)} \times Y_{h,(\triangle t)} \times X_{h,(\triangle t)}$,
where ${\mathbf{u}}$ and $v$ fulfill \eqref{eq:prop12equivtrans}.
Since $v$ satisfies the wave equation \eqref{prop1eq6}, we get \eqref{prop1eq4singledouble} from the representation formula:
\begin{equation*}
 v= D \lsem v|_\Gamma \rsem - S \lsem \partial_n v \rsem   = D \phi - S \lambda \ .
\end{equation*}
Combining \eqref{prop1eq9} with the jump relations for $\partial_n^{+} v$ and $v|_\Gamma$, as well as setting $\phi$ in the equation \eqref{prop1eq5}, we obtain \eqref{eq:FSIh}.
\end{proof}
\begin{proposition}\label{prop:equivformfsips}
Let $$\widetilde{Z}_{h,(\triangle t)} \!\! = \!\! \lbrace \! v \! \in \! H_{\sigma}^{1}\!(\mathbb{R}_{+},\!H^{1} \!(\mathbb{R}^{3} \backslash  \Gamma)) \! : \! \lsem v|_\Gamma \rsem \! \in \! Y_{h,(\triangle t)}, \! \langle \!v^{-}|_\Gamma \! , \! \dot{m} \! \rangle_{\Gamma \times \mathbb{R}^{+}} \! =  \! 0 \ \forall m \in X_{h,(\triangle t)} \rbrace\ .$$ 
Then Problem \eqref{eq:prop12equivtrans} is equivalent to: \\
Find $ ({\mathbf{u}},v) \in Z_{h,(\triangle t)} \times \widetilde{Z}_{h,(\triangle t)} $ such that 
\begin{equation}\label{bilinear formfsi}
  \mathcal{A}(({\mathbf{u}},v),(\dot{\mathbf{w}},\dot{w})) = f((\dot{\mathbf{w}},\dot{w})) \quad \forall ({\mathbf{w}},w) \in Z_{h,(\triangle t)} \times \widetilde{Z}_{h,(\triangle t)} \ ,
\end{equation}
where
\begin{flalign*}
 &\mathcal{A}(({\mathbf{u}},v),({\mathbf{w}},w)) := (\tilde{\sigma}({\mathbf{u}}), \varepsilon({\mathbf{w}}))_{\Omega\times \mathbb{R}^{+}\!,\sigma} + (\ddot{{\mathbf{u}}}, {\mathbf{w}})_{\Omega\times \mathbb{R}^{+}\!,\sigma} +
(\nabla v , \nabla w)_{\mathbb{R}^{3} \backslash \Gamma \times \mathbb{R}^{+}\!,\sigma} \\ &+ 
(\ddot{v}, w)_{\mathbb{R}^{3} \backslash \Gamma \times \mathbb{R}^{+}\!,\sigma} 
- \langle \dot{{\mathbf{u}}}|_\Gamma \cdot n , \lsem w|_\Gamma \rsem \rangle_{\Gamma\times \mathbb{R}^{+}\!,\sigma} + \langle { \lsem  \dot{v}|_\Gamma \rsem} ,{\mathbf{w}|_\Gamma} \cdot n\rangle_{\Gamma\times \mathbb{R}^{+}\!,\sigma} 
\end{flalign*}
and
\begin{equation*}
f(({\mathbf{w}},w)) := - \langle \dot{v}^{inc}, {\mathbf{w}|_\Gamma} \cdot n\rangle_{\Gamma\times \mathbb{R}^{+}\!,\sigma} + \langle{
\textstyle \textcolor{black}{\partial_n^{+}v^{inc}} }, \lsem  w|_\Gamma \rsem \rangle_{\Gamma\times \mathbb{R}^{+}\!,\sigma} \ .
\end{equation*}
\end{proposition}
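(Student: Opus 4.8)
The plan is to prove the two implications of the equivalence separately; both reduce to Green's first identity applied on $\Omega$ and on $\Omega^c$, combined with the jump relations, and follow the familiar pattern by which the stationary symmetric FEM--BEM coupling is rewritten as a coercive variational problem.

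\emph{From \eqref{eq:prop12equivtrans} to \eqref{bilinear formfsi}.} Given a solution $({\mathbf{u}},v)$ of \eqref{eq:prop12equivtrans}, the first assertion of \eqref{prop1eq7} and \eqref{prop1eq10} say precisely that $v\in\widetilde{Z}_{h,(\triangle t)}$, while ${\mathbf{u}}\in Z_{h,(\triangle t)}$. For a test pair $({\mathbf{w}},w)\in Z_{h,(\triangle t)}\times\widetilde{Z}_{h,(\triangle t)}$ I would expand $\mathcal{A}(({\mathbf{u}},v),(\dot{\mathbf{w}},\dot w))$: the elastic volume terms give $a({\mathbf{u}},\dot{\mathbf{w}})$, and integrating the wave volume terms $(\nabla v,\nabla\dot w)+(\ddot v,\dot w)$ by parts separately on $\Omega$ and $\Omega^c$ (the behaviour at infinity being controlled by the $H_\sigma^1$ framework) and using \eqref{prop1eq6} to cancel $-(\Delta v,\dot w)$ against $(\ddot v,\dot w)$ leaves $\langle\partial_n^- v,\dot w^-|_\Gamma\rangle-\langle\partial_n^+ v,\dot w^+|_\Gamma\rangle$, which after $\dot w^+|_\Gamma=\dot w^-|_\Gamma+\lsem\dot w|_\Gamma\rsem$ equals $-\langle\lsem\partial_n v\rsem,\dot w^-|_\Gamma\rangle-\langle\partial_n^+ v,\lsem\dot w|_\Gamma\rsem\rangle$. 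The first summand vanishes, since $\lsem\partial_n v\rsem\in X_{h,(\triangle t)}$ by \eqref{prop1eq7} and, by the constraint defining $\widetilde{Z}_{h,(\triangle t)}$ together with an integration by parts in time (licit by causality and the vanishing initial data), $\dot w^-|_\Gamma$ annihilates $X_{h,(\triangle t)}$. Substituting \eqref{prop1eq5} for $a({\mathbf{u}},\dot{\mathbf{w}})+\langle\lsem\dot v|_\Gamma\rsem,\dot{\mathbf{w}}|_\Gamma\cdot n\rangle$ and \eqref{prop1eq9}, applied to $\lsem w|_\Gamma\rsem\in Y_{h,(\triangle t)}$, for $-\langle\partial_n^+ v,\lsem\dot w\rsem\rangle-\langle\dot{\mathbf{u}}|_\Gamma\cdot n,\lsem\dot w\rsem\rangle$, and collecting terms, gives exactly $f((\dot{\mathbf{w}},\dot w))$.

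\emph{From \eqref{bilinear formfsi} to \eqref{eq:prop12equivtrans}.} Conversely, for a solution $({\mathbf{u}},v)\in Z_{h,(\triangle t)}\times\widetilde{Z}_{h,(\triangle t)}$ of \eqref{bilinear formfsi}, membership in $\widetilde{Z}_{h,(\triangle t)}$ already gives the first part of \eqref{prop1eq7} and \eqref{prop1eq10}. Testing with ${\mathbf{w}}=0$ and $w$ supported away from $\Gamma$ (such $w$, having vanishing traces, lie in $\widetilde{Z}_{h,(\triangle t)}$ and make $f$ vanish) yields $(\nabla v,\nabla\dot w)+(\ddot v,\dot w)=0$, hence, by density of the admissible $\dot w$ and causality, the weak wave equation, i.e.\ \eqref{prop1eq6}. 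Using \eqref{prop1eq6} to reverse the above integration by parts for general $w\in\widetilde{Z}_{h,(\triangle t)}$, ${\mathbf{w}}=0$, I reach $\langle\lsem\partial_n v\rsem,\dot w^-|_\Gamma\rangle+\langle\partial_n^+ v+\dot{\mathbf{u}}|_\Gamma\cdot n+\partial_n^+ v^{inc},\lsem\dot w|_\Gamma\rsem\rangle=0$ for all such $w$; restricting to $w$ with $\lsem w|_\Gamma\rsem=0$ isolates $\langle\lsem\partial_n v\rsem,\dot w^-|_\Gamma\rangle=0$ over all traces admitted by the constraint, from which an integration by parts in time (again using the vanishing initial data) and a duality argument give $\lsem\partial_n v\rsem\in X_{h,(\triangle t)}$, the second part of \eqref{prop1eq7}; letting then $\lsem w|_\Gamma\rsem$ range over $Y_{h,(\triangle t)}$ gives \eqref{prop1eq9}, and testing \eqref{bilinear formfsi} with $w=0$ and arbitrary ${\mathbf{w}}\in Z_{h,(\triangle t)}$ gives \eqref{prop1eq5}. (Since $v$ solves the wave equation, it automatically satisfies the representation \eqref{prop1eq4singledouble} with $\phi=\lsem v|_\Gamma\rsem$, $\lambda=\lsem\partial_n v\rsem$.) This recovers \eqref{eq:prop12equivtrans}.

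\emph{Main obstacle.} I expect the crux to be, in the converse direction, showing that the Neumann jump $\lsem\partial_n v\rsem$ of a solution of \eqref{bilinear formfsi} is again discrete, i.e.\ lies in $X_{h,(\triangle t)}$: this is the time--domain analogue of the discrete compatibility (``inf--sup'') ingredient of the stationary symmetric coupling, and it is precisely what makes the constraint encoded in $\widetilde{Z}_{h,(\triangle t)}$ the right one. This step, and the vanishing of the cross term $\langle\lsem\partial_n v\rsem,\dot w^-|_\Gamma\rangle$ in the forward direction, rest on careful bookkeeping of the $e^{-2\sigma t}$--weighted space--time inner products and of the time derivatives carried by the test functions, via causality and the vanishing of all fields at $t=0$; for the practically relevant value $\sigma=0$ these reductions are transparent.
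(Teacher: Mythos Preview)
Your proposal is correct and follows essentially the same route as the paper: in both directions the argument rests on Green's identity on $\Omega$ and $\Omega^c$, the wave equation \eqref{prop1eq6} to eliminate the volume terms, and the splitting of the boundary contribution into $-\langle\lsem\partial_n v\rsem,\dot w^{-}|_\Gamma\rangle-\langle\partial_n^{+}v,\lsem\dot w|_\Gamma\rsem\rangle$, after which the constraint in $\widetilde{Z}_{h,(\triangle t)}$ and \eqref{prop1eq7} kill the first term and \eqref{prop1eq5}, \eqref{prop1eq9} identify the remainder with $f$. Your flagged ``main obstacle''---that in the converse direction one must deduce $\lsem\partial_n v\rsem\in X_{h,(\triangle t)}$ from $\langle\lsem\partial_n v\rsem,\dot w^{-}|_\Gamma\rangle=0$ for all $w\in\widetilde{Z}_{h,(\triangle t)}$ with $\lsem w|_\Gamma\rsem=0$---is indeed the point the paper passes over most quickly, and your explicit mention of the integration by parts in time needed to match the pairing with the constraint $\langle w^{-}|_\Gamma,\dot m\rangle=0$ is a detail the paper leaves implicit.
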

\begin{proof}
First, we assume that \textcolor{black}{\eqref{eq:FSIh}} 
holds with $({\mathbf{u}},\phi,\lambda) \in Z_{h,(\triangle t)} \times Y_{h,(\triangle t)} \times X_{h,(\triangle t)} $.
Since \eqref{prop1eq7} and \eqref{prop1eq10} hold, we know that $({\mathbf{u}},v) \in Z_{h,(\triangle t)} \times \widetilde{Z}_{h,(\triangle t)}$.
Now for all $w \in \widetilde{Z}_{h,(\triangle t)} $ using  the second relation in \eqref{prop1eq7} and $\langle \lsem \partial_n v \rsem , w^{-}|_\Gamma \rangle_{\Gamma \times \mathbb{R}^{+}} = 0 $
, Green's formula and \eqref{prop1eq6} lead to
\begin{align*}
 -\langle \partial_n^{+} v & , \lsem \dot{w}|_\Gamma \rsem \rangle_{\Gamma\times \mathbb{R}^{+}\!,\sigma}
 = \langle \partial_{n}^{+} v , \dot{w}^{-}|_\Gamma \rangle_{\Gamma \times \mathbb{R}^{+}\!,\sigma} - \langle  \partial_{n}^{+} v , \dot{w}^{+}|_\Gamma \rangle_{\Gamma \times \mathbb{R}^{+}\!,\sigma} \\
 & \textcolor{black}{= \langle \partial_n^{-} v , \dot{w}^{-}|_\Gamma \rangle_{\Gamma\times \mathbb{R}^{+}} - \langle \partial_n^{-} v , \dot{w}^{-}|_\Gamma \rangle_{\Gamma\times \mathbb{R}^{+}} + \langle \partial_n^{+} v , \dot{w}^{-}|_\Gamma \rangle_{\Gamma\times \mathbb{R}^{+}} - \langle \partial_n^{+} v , \dot{w}^{+}|_\Gamma \rangle_{\Gamma\times \mathbb{R}^{+}} } \\
 &= \langle \partial_n^{-} v , \dot{w}^{-}|_\Gamma \rangle_{\Gamma\times \mathbb{R}^{+}\!,\sigma} - \langle \partial_n^{+} v , \dot{w}^{+}|_\Gamma \rangle_{\Gamma\times \mathbb{R}^{+}\!,\sigma} + \langle \lsem \partial_n v \rsem , \dot{w}^{-}|_\Gamma \rangle_{\Gamma\times \mathbb{R}^{+}\!,\sigma} \\
 &= \langle \partial_n^{-} v , \dot{w}^{-}|_\Gamma \rangle_{\Gamma\times \mathbb{R}^{+}\!,\sigma} - \langle \partial_n^{+} v , \dot{w}^{+}|_\Gamma \rangle_{\Gamma\times \mathbb{R}^{+}\!,\sigma} \\
 &= (\nabla v , \nabla \dot{w})_{\Omega\times \mathbb{R}^{+},\sigma} + (\Delta v , \dot{w})_{\Omega\times \mathbb{R}^{+},\sigma} + (\nabla v , \nabla \dot{w})_{\Omega^{c}\times \mathbb{R}^{+}\!,\sigma} + (\Delta v , \dot{w})_{\Omega^{c}\times \mathbb{R}^{+}\!,\sigma} \\
 &= (\nabla v , \nabla \dot{w})_{\mathbb{R}^{3} \backslash \Gamma \times \mathbb{R}^{+}\!,\sigma} + ( \ddot v , \dot{w})_{(\mathbb{R}^{3}\backslash \Gamma) \times \mathbb{R}^{+}\!,\sigma} \ .
\end{align*}
Therefore testing \eqref{prop1eq9} with $\lsem \dot{w}|_\Gamma \rsem $ for $ w \in \widetilde{Z}_{h,(\triangle t)} $, 
\begin{equation*}
 - \langle\dot{\mathbf{u}}|_\Gamma \cdot n + \partial_n^{+} v + \partial_n^{\textcolor{black}{+}} v^{inc}, \lsem \dot{w}|_\Gamma \rsem \rangle_{\Gamma\times \mathbb{R}^{+}\!,\sigma} = 0 \ ,
\end{equation*}
we get for all $ \ w \in \widetilde{Z}_{h,(\triangle t)}$
\begin{equation}\label{eq:wewillneeditlaterprop2proof1}
 - \langle  {\dot {\mathbf{u}}}|_\Gamma   \cdot  n , \lsem \dot{w}|_\Gamma \rsem \rangle_{\Gamma\times \mathbb{R}^{+}\!,\sigma} \!\!+\! (\! \nabla v ,\! \nabla \dot{w})_{\mathbb{R}^{3} \backslash \Gamma \times \mathbb{R}^{+}\!,\sigma}
 \!\!+\! (\ddot v, \!\dot{w})_{(\mathbb{R}^{3}\backslash \Gamma)  \times \mathbb{R}^{+}\!,\sigma} \!\!= \! \langle \partial_{n}^{\textcolor{black}{+}} v^{inc} , \lsem \dot{w}|_\Gamma \rsem \rangle_{\Gamma \times \mathbb{R}^{+}\!,\sigma} .
 \end{equation}
Adding up \eqref{eq:wewillneeditlaterprop2proof1} and \eqref{prop1eq5} yields
$\mathcal{A}(({\mathbf{u}},v),(\dot{\mathbf{w}}, \dot{w})) = f((\dot{\mathbf{w}},\dot{w}))$. \\ \ \\
Conversely, assume \eqref{bilinear formfsi} holds. 
\textcolor{black}{Using \eqref{bilinear formfsi} for a test function $w \in \widetilde{Z}_{h,(\triangle t)} $, with compact support in $\mathbb{R}^{3} \backslash \Gamma$ we obtain the equation \eqref{eq:wewillneeditlaterprop2proof1}:}
\begin{align*}
 (\ddot v, \dot{w})_{\mathbb{R}^{3} \times \mathbb{R}^{+}\!,\sigma} \!+\! (\nabla v , \nabla \dot{w})_{\mathbb{R}^{3} \backslash \Gamma \times \mathbb{R}^{+}\!,\sigma} \!-\! \langle  \dot{{\mathbf{u}}}|_\Gamma \cdot n , \lsem \dot{w}|_\Gamma \rsem \rangle_{\Gamma \times \mathbb{R}^{+}\!,\sigma} \!=\! \langle \partial_{n}^{\textcolor{black}{+}} v^{inc} , \lsem \dot{w}|_\Gamma \rsem \rangle_{\Gamma \times \mathbb{R}^{+}\!,\sigma} .
\end{align*}
Using integration by parts:
\begin{flalign*}
 (\ddot v, \dot{w})_{\mathbb{R}^{3} \times \mathbb{R}^{+}\!,\sigma} &+ (\partial_n^{-} v , \dot{w}^{-}|_\Gamma)_{\Gamma \times \mathbb{R}^{+}\!,\sigma} - (\Delta v , \dot{w})_{\mathbb{R}^{3} \backslash \Gamma \times \mathbb{R}^{+}\!,\sigma} \\
  &- \langle \partial_n^{+} v, \dot{w}^{+}|_\Gamma \rangle_{\Gamma \times \mathbb{R}^{+}\!,\sigma} - \langle {\dot{\mathbf{u}}}|_\Gamma \cdot n , \lsem \dot{w}|_\Gamma \rsem \rangle_{\Gamma \times \mathbb{R}^{+}\!,\sigma} = \langle \partial_{n}^{+} v^{inc} , \lsem \dot{w}|_\Gamma \rsem \rangle_{\Gamma \times \mathbb{R}^{+}\!,\sigma}  . &
\end{flalign*}
Since $v$ satisfies the wave equation on the support of $w$ in $ \mathbb{R}^{3} \backslash \Gamma$.
\begin{equation*}
(\ddot v- \Delta v , \dot{w})_{\mathbb{R}^{3} \backslash \Gamma \times \mathbb{R}^{+}\!,\sigma}  = 0 \ .
\end{equation*}
Equation \eqref{prop1eq6} follows. Next
\begin{align*}
 \langle \partial_n^{-} v , \dot{w}^{-}|_\Gamma \rangle_{\Gamma \times \mathbb{R}^{+}\!,\sigma} - \langle \partial_n^{+} v , \dot{w}^{+}|_\Gamma \rangle_{\Gamma \times \mathbb{R}^{+}\!,\sigma} - \langle  \dot{\mathbf{u}}|_\Gamma \cdot n , \lsem \dot{w}|_\Gamma \rsem \rangle_{\Gamma \times \mathbb{R}^{+}\!,\sigma} =  \langle \partial_{n}^{+} v^{inc} , \lsem \dot{w}|_\Gamma \rsem \rangle_{\Gamma \times \mathbb{R}^{+}\!,\sigma} \ .
\end{align*}
Hence, for all  $w \in \tilde{Z}_{h,\Delta}$
\begin{align*}
 - \langle \partial_n^{+} v + {\dot{\mathbf{u}}}|_\Gamma \cdot n +\partial_n^{\textcolor{black}{+}} v^{inc}, \lsem \dot{w}|_\Gamma \rsem \rangle_{\Gamma \times \mathbb{R}^{+}\!,\sigma} - \langle \lsem \partial_n v \rsem , \dot{w}|_\Gamma^- \rangle_{\Gamma \times \mathbb{R}^{+}\!,\sigma} =  0 \ .
\end{align*}
Choosing $\lsem w|_\Gamma \rsem = 0$, we get the second relation in \eqref{prop1eq7} because
\begin{equation*}
 \langle \lsem \partial_n v \rsem , \dot{w}|_\Gamma^- \rangle_{\Gamma \times \mathbb{R}^{+}\!,\sigma} = 0 .
\end{equation*}
Second, choose $w \in \widetilde{Z}_{h,(\triangle t)}$ such that $w|_\Gamma^- = 0 $ yields 
\begin{equation*}
 - \langle \partial_n^{+} v + {\dot{\mathbf{u}}}|_\Gamma \cdot n + \partial_n^{\textcolor{black}{+}} v^{inc}, \lsem \dot{w}|_\Gamma \rsem \rangle_{\Gamma \times \mathbb{R}^{+}\!,\sigma} = 0 \ .
\end{equation*}
and hence 
\eqref{prop1eq9}, since $\lsem w|_\Gamma \rsem \in Y_{h,(\triangle t)}$.
From the definition of $ \widetilde{Z}_{h,(\triangle t)} $ we already get \eqref{prop1eq10} and \eqref{prop1eq7}.

Finally, we obtain the equation \eqref{prop1eq5} from the remaining terms in \eqref{bilinear formfsi}. 
\end{proof}

An analogous assertion to Propositions \ref{prop:FSISayas41} and \ref{prop:equivformfsips} holds for the continuous problem, instead of the finite element discretization.\\

We now aim to prove  coercivity of the problem \textcolor{black}{\eqref{bilinear formfsi}} for $({\mathbf{u}},v)$ in a suitable norm, defined as:
\begin{align*}
 \vertiii{({\mathbf{u}},v)}^{2} = (\tilde{\sigma}({\mathbf{u}}), \varepsilon({\mathbf{u}}))_{\Omega \times \mathbb{R}^{+}\!,\sigma} + (\dot{{\mathbf{u}}},\dot{{\mathbf{u}}})_{\Omega \times \mathbb{R}^{+}\!,\sigma} +(\nabla v,\nabla v)_{\mathbb{R}^{3}\backslash \Gamma \times \mathbb{R}^{+}\!,\sigma}
 +(\dot{v},\dot{v})_{\mathbb{R}^{3}\backslash \Gamma \times \mathbb{R}^{+}\!,\sigma} \ .
\end{align*}
Note that
\begin{flalign*}
 &\mathcal{A}(({\mathbf{u}},v),(\dot{{\mathbf{u}}}, \dot{v})) = (\tilde{\sigma}({\mathbf{u}}), \varepsilon(\dot{{\mathbf{u}}}))_{\Omega \times \mathbb{R}^{+}\!,\sigma} + (\ddot{{\mathbf{u}}}, \dot{{\mathbf{u}}})_{\Omega\times \mathbb{R}^{+}\!,\sigma} +
(\nabla v , \nabla \dot{v})_{\mathbb{R}^{3} \backslash \Gamma \times \mathbb{R}^{+}\!,\sigma} 
\\ &+ (\ddot{v}, \dot{v})_{\mathbb{R}^{3} \backslash \Gamma \times \mathbb{R}^{+}\!,\sigma} 
+ \langle {\dot{\mathbf{u}}}|_\Gamma \cdot n , \lsem \gamma \dot{v} \rsem \rangle_{\Gamma \times \mathbb{R}^{+}\!,\sigma} - \langle \lsem \textcolor{black}{\dot{v}|_{\Gamma}} \rsem , {\dot{\mathbf{u}}}|_\Gamma \cdot n\rangle_{\Gamma \times \mathbb{R}^{+}\!,\sigma} \\
&= (\tilde{\sigma}({\mathbf{u}}), \varepsilon(\dot{{\mathbf{u}}}))_{\Omega \times \mathbb{R}^{+}\!,\sigma} + (\ddot{{\mathbf{u}}}, \dot{{\mathbf{u}}})_{\Omega \times \mathbb{R}^{+}\!,\sigma} +
(\nabla v , \nabla \dot{v})_{\mathbb{R}^{3} \backslash \Gamma \times \mathbb{R}^{+}\!,\sigma} + 
(\ddot{v}, \dot{v})_{\mathbb{R}^{3} \backslash \Gamma \times \mathbb{R}^{+}\!,\sigma} \\
&= \int_{0}^{\infty} \int_{\Omega} \tilde{\sigma}({\mathbf{u}}) : \varepsilon(\dot{{\mathbf{u}}}) dx e^{-2 \sigma t} dt 
+ \int_{0}^{\infty} \int_{\Omega} \ddot{{\mathbf{u}}} \dot{{\mathbf{u}}} dx e^{-2 \sigma t} dt \\
&+ \int_{0}^{\infty} \int_{\mathbb{R}^{3}\backslash\Gamma} \nabla v  \nabla \dot{v} dx e^{-2 \sigma t} dt
+ \int_{0}^{\infty} \int_{\mathbb{R}^{3}\backslash\Gamma} \ddot{v}  \dot{v} dx e^{-2 \sigma t} dt \\ 
&= \int_{0}^{\infty} \tfrac{1}{2} \partial_t (\int_{\Omega} \tilde{\sigma}({\mathbf{u}}) : \varepsilon({\mathbf{u}}) dx) e^{-2 \sigma t} dt + \int_{0}^{\infty} \tfrac{1}{2} \partial_t (\int_{\Omega}  \dot{{\mathbf{u}}}^2 dx ) e^{-2 \sigma t} dt \\
&+ \int_{0}^{\infty} \tfrac{1}{2} \partial_t (\int_{\mathbb{R}^{3}\backslash\Gamma}  (\nabla v)^2  dx ) e^{-2 \sigma t} dt
+ \int_{0}^{\infty} \tfrac{1}{2} \partial_t (\int_{\mathbb{R}^{3}\backslash\Gamma} \dot{v}^2 dx ) e^{-2 \sigma t} dt \ .
\end{flalign*}Using integration by parts in time,  the zero initial condition and $\sigma>0$, 
we obtain the  coercivity estimate:
\begin{flalign*}
&\mathcal{A}(\!({\mathbf{u}},v),(\dot{{\mathbf{u}}}, \dot{v})\!) \!= \textcolor{black}{- \tfrac{1}{2} \!\! \int_{0}^{\infty} \!\!\Big( \!\int_{\Omega} \!\! \tilde{\sigma}({\mathbf{u}}) \! : \! \varepsilon({\mathbf{u}}) dx \Big) \partial_t (e^{-2 \sigma t}) dt
 -\!\tfrac{1}{2} \!\! \int_{0}^{\infty} \!\!\Big( \!\int_{\Omega}  \dot{{\mathbf{u}}}^2 dx \Big) \partial_t(e^{-2 \sigma t}) dt} \\
 &-\tfrac{1}{2} \int_{0}^{\infty} ( \int_{\mathbb{R}^{3}\backslash\Gamma}   (\nabla v)^2 dx ) \partial_t(e^{-2 \sigma t}) dt
 -\tfrac{1}{2} \int_{0}^{\infty} ( \int_{\mathbb{R}^{3}\backslash\Gamma}   \dot{v}^2 dx  )\partial_t(e^{-2 \sigma t}) dt \\ 
&= \sigma (\int_{0}^{\infty} ( \int_{\Omega} \tilde{\sigma}({\mathbf{u}}) : \varepsilon({\mathbf{u}}) dx) e^{-2 \sigma t} dt
 + \int_{0}^{\infty} ( \int_{\Omega}  \dot{{\mathbf{u}}}^2 dx ) e^{-2 \sigma t} dt \\ 
 &+ \int_{0}^{\infty} ( \int_{\mathbb{R}^{3}\backslash\Gamma}  (\nabla v)^2 dx ) e^{-2 \sigma t} dt
 + \int_{0}^{\infty} ( \int_{\mathbb{R}^{3}\backslash\Gamma} \dot{v}^2 dx  ) e^{-2 \sigma t} dt ) \\
&= \sigma \vertiii{({\mathbf{u}},v)}^{2} \textcolor{black}{\gtrsim_{\sigma}} \lVert {\mathbf{u}} \rVert_{0,1,\Omega}^{2} + \lVert v \rVert_{0,1,\Omega^{c}}^{2} \ .
\end{flalign*}
This implies, in particular, uniqueness of \textcolor{black}{the solution \eqref{bilinear formfsi} and therefore also the} solutions to \eqref{eq:FSIsolkap4} and  \eqref{eq:FSIh}.

\section{A priori error estimate}\label{sec3}
We state an a priori error estimate:
\begin{theorem}\label{mainthm}
 Let $(\mathbf{u},\phi,\lambda) \in \widetilde{X}$ satisfy \eqref{eq:FSIsolkap4} and $(\mathbf{u}_{h},\phi_{h},\lambda_{h}) \in \widetilde{X}_{h,\Delta t} $ satisfy \eqref{eq:FSIh}. Then 
 \begin{align*}
  &\lVert \mathbf{u} - \mathbf{u}_{h} \rVert_{0,1,\Omega}^{2} + \lVert \phi - \phi_{h} \rVert_{0,1/2,\Gamma}^{2} + \lVert \lambda-\lambda_{h} \rVert_{0,-1/2,\Gamma}^{2}\lesssim_{\sigma} \\ &\inf_{(\mathbf{w}_{h},\psi_{h},\mu_{h}) \in\widetilde{X}_{h,\Delta t}} (1+\!\!\frac{1}{(\Delta t)^{2}}) \lVert \mathbf{u} \!-\! \mathbf{w}_{h} \rVert_{1,1,\Omega}^{2} \!+\!(\!1\!+\!\!\frac{1}{(\Delta t)^{2}}) \lVert \phi \!-\! \psi_{h} \rVert_{1,1/2,\Gamma}^{2} \!\!\! +\!(\!1\!+\!\!\frac{1}{(\Delta t)^{2}}) \lVert \lambda\!-\!\mu_{h} \rVert_{1,-1/2,\Gamma}^{2}.
 \end{align*}
\end{theorem}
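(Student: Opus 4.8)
The plan is to run a Céa-type quasi-optimality argument for the coercive reformulation \eqref{bilinear formfsi}, and then translate the result into the stated norms. First I would invoke Propositions~\ref{prop:FSISayas41} and~\ref{prop:equivformfsips}, together with their continuous analogue mentioned above, to replace the solutions $(\mathbf{u},\phi,\lambda)$ of \eqref{eq:FSIsolkap4} and $(\mathbf{u}_h,\phi_h,\lambda_h)$ of \eqref{eq:FSIh} by the pairs $(\mathbf{u},v)$ and $(\mathbf{u}_h,v_h)$, $v=D\phi-S\lambda$, $v_h=D\phi_h-S\lambda_h$, which satisfy \eqref{bilinear formfsi} in $Z\times\widetilde Z$, resp.\ in $Z_{h,\Delta t}\times\widetilde Z_{h,\Delta t}$. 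Since the continuous solution satisfies \eqref{bilinear formfsi} tested against \emph{every} $(\mathbf{w},w)$, in particular against all discrete ones, subtracting the discrete equation gives Galerkin orthogonality
\[
 \mathcal{A}\big((\mathbf{u}-\mathbf{u}_h,\,v-v_h),\,(\dot{\mathbf{w}}_h,\dot w_h)\big)=0 \qquad \forall\,(\mathbf{w}_h,w_h)\in Z_{h,\Delta t}\times\widetilde Z_{h,\Delta t}\ .
\]

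Next, for arbitrary $(\mathbf{w}_h,\psi_h,\mu_h)\in\widetilde X_{h,\Delta t}$ I would pick a suitable $w_h\in\widetilde Z_{h,\Delta t}$ approximating $v$ (built from $D\psi_h-S\mu_h$, corrected so as to meet the constraint defining $\widetilde Z_{h,\Delta t}$; the constraint violation of $D\psi_h-S\mu_h$ is itself $O(\|\phi-\psi_h\|+\|\lambda-\mu_h\|)$ because $v^-|_\Gamma=0$ by \eqref{eq:representationeq_J}), and set $\theta_h:=(\mathbf{u}_h-\mathbf{w}_h,\,v_h-w_h)\in Z_{h,\Delta t}\times\widetilde Z_{h,\Delta t}$ and $\eta:=(\mathbf{u}-\mathbf{w}_h,\,v-w_h)$, so that the error is $\eta-\theta_h$ and the second components of $\eta,\theta_h$ solve the homogeneous wave equation off $\Gamma$. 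The coercivity identity already established gives $\sigma\vertiii{\theta_h}^2=\mathcal{A}(\theta_h,\dot\theta_h)$, and Galerkin orthogonality applied with the admissible test function $\dot\theta_h$ turns this into $\sigma\vertiii{\theta_h}^2=\mathcal{A}(\eta,\dot\theta_h)$. The heart of the proof is then the continuity estimate
\[
 |\mathcal{A}(\eta,\dot\theta_h)|\ \lesssim_\sigma\ \Big(1+\tfrac1{\Delta t}\Big)\Big(\|\mathbf{u}-\mathbf{w}_h\|_{1,1,\Omega}+\|v-w_h\|_{1,1,\mathbb{R}^3\setminus\Gamma}\Big)\,\vertiii{\theta_h}\ ,
\]
which I would prove termwise: integrating by parts in time in each term of $\mathcal{A}(\eta,\dot\theta_h)$ so that at most one time derivative lands on $\eta$, the elasticity, gradient and coupling terms are bounded by Cauchy--Schwarz and the trace theorem, while for the second-order-in-time contributions one uses that the second component of $\theta_h$ equals $D(\phi_h-\psi_h)-S(\lambda_h-\mu_h)$, a wave solution off $\Gamma$, so that its Laplacian is $\partial_t^2$ and its Neumann traces are $W(\phi_h-\psi_h)-K^{T}(\lambda_h-\mu_h)\pm\tfrac12(\lambda_h-\mu_h)$; bounding these through the mapping properties of $W,K,K^{T}$ from Appendix~A and an inverse inequality in time on the genuinely discrete functions $\mathbf{u}_h-\mathbf{w}_h$, $\phi_h-\psi_h$, $\lambda_h-\mu_h$ produces the factor $\tfrac1{\Delta t}$. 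With coercivity this yields $\vertiii{\theta_h}\lesssim_\sigma(1+\tfrac1{\Delta t})\big(\|\mathbf{u}-\mathbf{w}_h\|_{1,1,\Omega}+\|v-w_h\|_{1,1,\mathbb{R}^3\setminus\Gamma}\big)$.

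Squaring, using the lower bound $\vertiii{\cdot}^2\gtrsim_\sigma\|\cdot\|_{0,1,\Omega}^2+\|\cdot\|_{0,1,\mathbb{R}^3\setminus\Gamma}^2$ of the triple norm (Poincar\'e in time plus the gradient term) and the triangle inequality gives $\|\mathbf{u}-\mathbf{u}_h\|_{0,1,\Omega}^2+\|v-v_h\|_{0,1,\mathbb{R}^3\setminus\Gamma}^2\lesssim_\sigma(1+\tfrac1{(\Delta t)^2})\big(\|\mathbf{u}-\mathbf{w}_h\|_{1,1,\Omega}^2+\|v-w_h\|_{1,1,\mathbb{R}^3\setminus\Gamma}^2\big)$. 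It then remains to re-express everything in the stated variables: on the right, the mapping properties of the layer potentials $D,S$ in the space--time anisotropic Sobolev spaces bound $\|v-w_h\|_{1,1,\mathbb{R}^3\setminus\Gamma}$ by $\|\phi-\psi_h\|_{1,1/2,\Gamma}+\|\lambda-\mu_h\|_{1,-1/2,\Gamma}$; on the left, the jump relations identify $\phi-\phi_h=\lsem(v-v_h)|_\Gamma\rsem$ and $\lambda-\lambda_h=\lsem\partial_n(v-v_h)\rsem$, so the trace theorem controls $\|\phi-\phi_h\|_{0,1/2,\Gamma}$ by $\|v-v_h\|_{0,1,\mathbb{R}^3\setminus\Gamma}$, while $\|\lambda-\lambda_h\|_{0,-1/2,\Gamma}$ is controlled via the representation equation \eqref{eq:representationeq_J}, the coercivity of the single layer operator $V$, and again an inverse estimate in time. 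Taking the infimum over $(\mathbf{w}_h,\psi_h,\mu_h)$ then gives the claim. \textbf{The main obstacle} is the continuity estimate: carefully bookkeeping the powers of $\Delta t$ through the integrations by parts and inverse inequalities, and handling the nonlocal-in-time boundary integral operators $V,K,K^{T},W$ appearing in the coupling terms; the mismatch between time regularity $0$ on the left and $1$ on the right is precisely the loss of one time derivative inherent to the space--time anisotropic Sobolev framework, as already noted in the text.
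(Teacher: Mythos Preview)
Your plan is essentially the paper's proof: pass to the coercive reformulation \eqref{bilinear formfsi} via Propositions~\ref{prop:FSISayas41}--\ref{prop:equivformfsips}, use the coercivity identity $\sigma\vertiii{\theta_h}^2=\mathcal{A}(\theta_h,\dot\theta_h)$, rewrite via Galerkin orthogonality as $\mathcal{A}(\eta,\dot\theta_h)$, then simplify the volume terms to boundary terms using Green's theorem and the wave equation satisfied by all second components, and finally bound termwise with the mapping properties of $W,K,K^T$ and an inverse inequality in time on the discrete factor. The paper packages the continuity step with Young's inequality and absorption of $\epsilon$-terms instead of your Cauchy--Schwarz plus division, which is cosmetic.

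Two genuine technical differences are worth flagging. First, the paper does \emph{not} correct $D\psi_h-S\mu_h$ to enforce the constraint of $\widetilde Z_{h,\Delta t}$: it simply sets $\tilde r=D\tilde\phi-S\tilde\lambda$ and proceeds, so your correction step is unnecessary here. Second, and more substantively, the paper does not postpone the control of $\|\lambda-\lambda_h\|_{0,-1/2,\Gamma}$ to the end. Instead it uses at the outset the single inequality
\[
\|\tilde\phi-\phi_h\|_{0,1/2,\Gamma}^2+\|\tilde\lambda-\lambda_h\|_{0,-1/2,\Gamma}^2\ \lesssim\ \|\tilde r-v_h\|_{0,1,\mathbb{R}^3\setminus\Gamma}^2,
\]
i.e.\ the energy of a layer-potential wave solution controls both jumps simultaneously. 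This delivers the $\lambda$-bound for free once $\vertiii{\theta_h}$ is bounded. Your proposed route through the representation equation \eqref{eq:representationeq_J} and the coercivity of $V$ would require a further splitting $\lambda-\lambda_h=(\lambda-\mu_h)+(\mu_h-\lambda_h)$ and an additional inverse estimate in time, and you would have to check carefully that this does not introduce an extra power of $(\Delta t)^{-1}$ beyond the stated $(\Delta t)^{-2}$.
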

\begin{proof}
 Let $(\mathbf{u},\phi,\lambda) \in \widetilde{X}$ satisfy \eqref{eq:FSIsolkap4} and $(\mathbf{u}_{h},\phi_{h},\lambda_{h}) \in \widetilde{X}_{h,\Delta t} $ satisfy \eqref{eq:FSIh}. Then for all $(\tilde{\mathbf{w}},\tilde{\phi},\tilde{\lambda}) \in \widetilde{X}_{h,\Delta t}$
 \begin{flalign*}
  &\lVert \mathbf{u} - \mathbf{u}_{h} \rVert_{0,1,\Omega}^{2} + \lVert \phi - \phi_{h} \rVert_{0,1/2,\Gamma}^{2} + \lVert \lambda-\lambda_{h} \rVert_{0,-1/2,\Gamma}^{2} &\\
  &\lesssim \lVert \mathbf{u} - \tilde{\mathbf{w}} \rVert_{0,1,\Omega}^{2} + \lVert \tilde{\mathbf{w}} - \mathbf{u}_{h} \rVert_{0,1,\Omega}^{2}+ \lVert \phi - \tilde{\phi} \rVert_{0,1/2,\Gamma}^{2} +  \lVert \tilde{\phi} - \phi_{h} \rVert_{0,1/2,\Gamma}^{2} & \\ &+ \lVert \lambda-\tilde{\lambda} \rVert_{0,-1/2,\Gamma}^{2}+ \lVert \tilde{\lambda}-\lambda_{h} \rVert_{0,-1/2,\Gamma}^{2} \ . &
 \end{flalign*}
We therefore  focus on estimates for $ \lVert \tilde{\mathbf{w}} - \mathbf{u}_{h} \rVert_{0,1,\Omega}^{2} + \lVert \tilde{\phi} - \phi_{h} \rVert_{0,1/2,\Gamma}^{2} +  \lVert \tilde{\lambda}-\lambda_{h} \rVert_{0,-1/2,\Gamma}^{2} $. 
Now using coercivity and Galerkin orthogonality \textcolor{black}{with $v:=D\phi-S \lambda$, $v_{h} := D \phi_{h} - S \lambda_{h}$ and $\tilde{r} := D \tilde{\phi}-S \tilde{\lambda}$ }
\begin{flalign*}
 & \lVert \tilde{\mathbf{w}} - \mathbf{u}_{h} \rVert_{0,1,\Omega}^{2} + \lVert \tilde{\phi} - \phi_{h} \rVert_{0,1/2,\Gamma}^{2} +  \lVert \tilde{\lambda}-\lambda_{h} \rVert_{0,-1/2,\Gamma}^{2} \\ & 
  \lesssim \lVert \tilde{\mathbf{w}} - \mathbf{u}_{h} \rVert_{0,1,\Omega}^{2} + \lVert \tilde{r} - v_{h} \rVert_{0,1,\mathbb{R}^{3}\backslash\Gamma}^{2} & \\
 & \lesssim_\sigma \vertiii{(\tilde{\mathbf{w}}-\mathbf{u}_{h},\tilde{r}-v_{h})}^{2}
 = \mathcal{A}\Big(\begin{pmatrix} \tilde{\mathbf{w}}-\mathbf{u}_{h} \\ \tilde{r} - v_{h}\end{pmatrix}^{\mathrm{T}}, \begin{pmatrix} \dot{\tilde{\mathbf{w}}}-\dot{\mathbf{u}}_{h}) \\  \dot{\tilde{r}} - \dot{v}_{h}) \end{pmatrix}^{\mathrm{T}}\Big)\ . & \\& = \mathcal{A}\Big(\begin{pmatrix} \tilde{\mathbf{w}}-\mathbf{u} \\ \tilde{r} - v\end{pmatrix}^{\mathrm{T}}, \begin{pmatrix} \dot{\tilde{\mathbf{w}}}-\dot{\mathbf{u}}_{h}) \\ \dot{\tilde{r}} -\dot{v}_{h}) \end{pmatrix}^{\mathrm{T}}\Big) +
  \mathcal{A}\Big(\begin{pmatrix} \mathbf{u}-\mathbf{u}_{h} \\ v-v_{h}\end{pmatrix}^{\mathrm{T}}, \begin{pmatrix} \dot{\tilde{\mathbf{w}}}-\dot{\mathbf{u}}_{h}\\ \dot{\tilde{r}} - \dot{v}_{h} \end{pmatrix}^{\mathrm{T}}\Big) \\
  &= \mathcal{A}\Big(\begin{pmatrix} \tilde{\mathbf{w}}-\mathbf{u} \\ \tilde{r} - v\end{pmatrix}^{\mathrm{T}}, \begin{pmatrix} \dot{\tilde{\mathbf{w}}}-\dot{\mathbf{u}}_{h} \\ \dot{\tilde{r}} - \dot{v}_{h}) \end{pmatrix}^{\mathrm{T}}\Big) \ .& 
 \end{flalign*}
By definition of the single and double layer potentials $S$ and $D$, the functions \textcolor{black}{$v,v_{h},\tilde{r}$} 
all satisfy the wave equation in $\mathbb{R}^3 \backslash \Gamma$.  Further, from  Proposition \ref{prop:FSISayas41}, $\lsem  v_{h}|_\Gamma \rsem, \lsem \tilde{r}|_\Gamma \rsem \in Y_{h,\Delta t}^0$, $\lsem \partial_{n} v_{h} \rsem , \lsem \partial_{n} \tilde{r} \rsem \in X_{h,\Delta t}^0$ and $\langle \textcolor{black}{{v}_{h}^{-}|_{\Gamma}} , m \rangle_{\Gamma \times \mathbb{R}^{+}} = 0 $,  $\langle \textcolor{black}{\tilde{r}^{-}|_{\Gamma}} , m \rangle_{\Gamma \times \mathbb{R}^{+}} = 0$. Using the definition of $\mathcal{A}$ and Green's theorem, we find
\begin{flalign*}
 & \lVert \tilde{\mathbf{w}} - \mathbf{u}_{h} \rVert_{0,1,\Omega}^{2} + \lVert \tilde{\phi} - \phi_{h} \rVert_{0,1/2,\Gamma}^{2} +  \lVert \tilde{\lambda}-\lambda_{h} \rVert_{0,-1/2,\Gamma}^{2} 
& \\
  &\lesssim_\sigma \int\limits_{0}^{\infty} e^{-2 \sigma t} \Bigg\lbrace \int\limits_{\Omega} \tilde{\sigma}(\tilde{\mathbf{w}}-\mathbf{u}) : \varepsilon(\dot{\tilde{\mathbf{w}}}-\dot{\mathbf{u}}_{h}) dx + \int_{\Omega}  (\ddot{\tilde{\mathbf{w}}}-\ddot{\mathbf{u}}) (\dot{\tilde{\mathbf{w}}}-\dot{\mathbf{u}}_{h}) dx & \\
  &+ \int\limits_{\mathbb{R}^{3}\backslash\Gamma} \nabla(\tilde{r}-v) \nabla(\dot{\tilde{r}}-\dot{v}_{h}) dx + \int\limits_{\mathbb{R}^{3}\backslash\Gamma} (\ddot{\tilde{r}}-\ddot{v}) (\dot{\tilde{r}}-\dot{v}_{h}) dx & \\
  &- \int\limits_{\Gamma} (\dot{\tilde{\mathbf{w}}}-\dot{\mathbf{u}})|_\Gamma\cdot n \lsem (\dot{\tilde{r}}-\dot{v}_{h})|_\Gamma \rsem ds_x + \int\limits_{\Gamma} \lsem (\dot{\tilde{r}}-\dot{v})|_\Gamma \rsem (\dot{\tilde{\mathbf{w}}}-\dot{\mathbf{u}}_{h})|_\Gamma \cdot n ds_x \Bigg\rbrace dt & \\
  &= \int\limits_{0}^{\infty} e^{-2 \sigma t} \Bigg\lbrace \int\limits_{\Omega} \tilde{\sigma}(\tilde{\mathbf{w}}-\mathbf{u}) : \varepsilon(\dot{\tilde{\mathbf{w}}}-\dot{\mathbf{u}}_{h}) dx + \int_{\Omega} (\ddot{\tilde{\mathbf{w}}}-\ddot{\mathbf{u}}) (\dot{\tilde{\mathbf{w}}}-\dot{\mathbf{u}}_{h}) dx & \\
  &+ \int\limits_{\Gamma} \partial_{n}^{+}(\tilde{r}-v) \lsem (\dot{\tilde{r}}-\dot{v}_{h})|_\Gamma \rsem ds_x 
  - \int\limits_{\Gamma}(\dot{\tilde{\mathbf{w}}}- \dot{\mathbf{u}})|_\Gamma\cdot n \lsem (\dot{\tilde{r}}-\dot{v}_{h})|_\Gamma \rsem ds_x \\ &+ \int\limits_{\Gamma} \lsem (\dot{\tilde{r}}-\dot{v})|_\Gamma \rsem(\dot{\tilde{\mathbf{w}}}-\dot{\mathbf{u}}_{h})|_\Gamma \cdot n ds_x \Bigg\rbrace dt  &
    \end{flalign*}
 \begin{flalign*}
   &= \int\limits_{0}^{\infty} e^{-2 \sigma t} \Bigg\lbrace \int\limits_{\Omega} \tilde{\sigma}(\tilde{\mathbf{w}}-\mathbf{u}) : \varepsilon(\dot{\tilde{\mathbf{w}}}-\dot{\mathbf{u}}_{h})) dx + \int_{\Omega} (\ddot{\tilde{\mathbf{w}}}-\ddot{\mathbf{u}}) (\dot{\tilde{\mathbf{w}}}-\dot{\mathbf{u}}_{h}) dx & \\
  &+ \int\limits_{\Gamma} (W (\tilde{\phi}-\phi) - (K^{T}-\frac{1}{2}I) (\tilde{\lambda}-\lambda)) (\dot{\tilde{\phi}}-\dot{\phi}_{h}) ds_x & \\
  &- \int\limits_{\Gamma} (\dot{\tilde{\mathbf{w}}}-\dot{\mathbf{u}})|_\Gamma\cdot n (\dot{\tilde{\phi}}-\dot{\phi}_{h})) ds_x + \int\limits_{\Gamma} (\dot{\tilde{\phi}}-\dot{\phi}) (\dot{\tilde{\mathbf{w}}}-\dot{\mathbf{u}}_{h})|_\Gamma \cdot n ds_x \Bigg\rbrace dt  \ .& 
\end{flalign*}
We estimate the individual terms. 
Using Young's inequality we have for  $\epsilon>0$
\begin{flalign*}
  &\int\limits_{0}^{\infty} e^{-2 \sigma t} \Big\lbrace \int\limits_{\Omega} \tilde{\sigma}(\tilde{\mathbf{w}}-\mathbf{u}) : \varepsilon(\dot{\tilde{\mathbf{w}}}-\dot{\mathbf{u}}_{h})) dx + \int_{\Omega} (\ddot{\tilde{\mathbf{w}}}-\ddot{\mathbf{u}}) (\dot{\tilde{\mathbf{w}}}-\dot{\mathbf{u}}_{h}) dx \Big\rbrace dt & \\
  &\lesssim_{\sigma} \tfrac{1}{\epsilon (\Delta t)^{2}} \lVert \tilde{\mathbf{w}} - \mathbf{u} \rVert_{\textcolor{black}{1},1,\Omega}^{2} + \epsilon \lVert \tilde{\mathbf{w}}-\mathbf{u}_{\textcolor{black}{h}} \rVert_{0,1,\Omega}^{2} \ .
\end{flalign*}
Next we estimate
\begin{flalign*}
 &\int\limits_{0}^{\infty} e^{-2\sigma t} \int\limits_{\Gamma} (W (\tilde{\phi}-\phi) - (K^{T}-\frac{1}{2}I) (\tilde{\lambda}-\lambda)) (\dot{\tilde{\phi}}-\dot{\phi}_{h}) ds_x dt & \\
 &\lesssim ( \lVert W (\tilde{\phi}-\phi)  \rVert_{0,-1/2,\Gamma} + \lVert (K^{T}-\frac{1}{2}I) (\tilde{\lambda}-\lambda) \rVert_{0,-1/2,\Gamma}) \lVert \tilde{\phi}-\phi_{h} \rVert_{1,1/2,\Gamma} \ .
\end{flalign*}
Using the inverse estimate as in (3.182) in \cite{phdglafke} 
\begin{equation*}
 \lVert \tilde{\phi} \rVert_{1,1/2,\Gamma} \lesssim \frac{1}{\Delta t} \lVert \tilde{\phi} \rVert_{0,1/2,\Gamma} \ ,
\end{equation*}
we further estimate with the mapping properties of the integral operators 
\begin{flalign*}
 &\lesssim_{\sigma} ( \lVert W (\tilde{\phi}-\phi)  \rVert_{0,-1/2,\Gamma} + \lVert (K^{T}-\frac{1}{2}I) (\tilde{\lambda}-\lambda) \rVert_{0,-1/2,\Gamma}) \frac{1}{\Delta t}\lVert \tilde{\phi}-\phi_{h} \rVert_{0,1/2,\Gamma} \\
 &\lesssim \tfrac{1}{\epsilon (\Delta t)^{2}} \lVert \tilde{\phi}-\phi \rVert_{\textcolor{black}{1},1/2,\Gamma}^{2} + \tfrac{1}{\epsilon (\Delta t)^{2}} \lVert \tilde{\lambda} - \lambda \rVert_{1,-1/2,\Gamma}^{2} + \epsilon \lVert \tilde{\phi}-\phi_{h} \rVert_{0,1/2,\Gamma}^{2} \ .
\end{flalign*}
For the fourth term, we get:
\begin{flalign*}
 &\int_{0}^{\infty} e^{-2 \sigma t} \int_{\Gamma} (\dot{\tilde{\mathbf{w}}}-\dot{\mathbf{u}})|_\Gamma \cdot n (\dot{\tilde{\phi}}-\dot{\phi}_{h}) ds_x dt & \\
 &\lesssim_{\sigma} \lVert (\dot{\tilde{\mathbf{w}}}-\dot{\mathbf{u}})|_\Gamma \cdot n \rVert_{0,-1/2,\Gamma} \lVert  (\dot{\tilde{\phi}}-\dot{\phi}_{h}) \rVert_{0,1/2,\Gamma} \\
 &\lesssim \lVert \tilde{\mathbf{w}}-\mathbf{u} \rVert_{1,1/2,\Gamma} \lVert \tilde{\phi}-\phi_{h} \rVert_{1,1/2,\Gamma} \ 
 \lesssim \tfrac{1}{\epsilon (\Delta t)^{2}} \lVert \tilde{\mathbf{w}} - \mathbf{u} \rVert_{1,1,\Omega}^{2} + \epsilon \lVert \tilde{\phi} - \phi_{h} \rVert_{0,1/2,\Gamma}^{2} \ .
 \end{flalign*}
For the last term, analogously the trace theorem and the inverse estimate show:
\begin{flalign*}
 &\int_{0}^{\infty} \!\!\!\!e^{-2 \sigma t}\!\!\! \int_{\Gamma} (\dot{\tilde{\phi}}-\dot{\phi})  (\dot{\tilde{\mathbf{w}}}-\dot{\mathbf{u}}_{h})|_\Gamma \cdot n ds_x dt  
 \lesssim_{\sigma}\lVert \dot{\tilde{\phi}}-\dot{\phi}\rVert_{0,1/2,\Gamma} \lVert \dot{\tilde{\mathbf{w}}} - \dot{\mathbf{u}}_{h}|_\Gamma\!\cdot\! n \rVert_{0,-1/2,\Gamma} &\\
 & \lesssim \tfrac{1}{\Delta t} \lVert \tilde{\phi} - \phi \rVert_{1,1/2,\Gamma} \lVert \tilde{\mathbf{w}}-\mathbf{u}_{h} \rVert_{0,1,\Omega} 
 \lesssim \tfrac{1}{\epsilon(\Delta t)^{2}} \lVert \tilde{\phi} - \phi \rVert_{1,1/2,\Gamma}^{2} +  \epsilon \lVert \tilde{\mathbf{w}}-\mathbf{u}_{h} \rVert_{0,1,\Omega}^{2} \ .
\end{flalign*}
Moving the terms with positive powers of $\epsilon$ to the left hand side  \textcolor{black}{and choosing a fixed, sufficiently small $\epsilon>0$ depending on $\sigma$, we conclude:}
\begin{flalign*}
 &\lVert \tilde{\mathbf{w}} - \mathbf{u}_{h} \rVert_{0,1,\Omega}^{2} +\lVert \tilde{\phi} - \phi_{h}\rVert_{0,1/2,\Gamma}^{2} + \lVert \tilde{\lambda} - \lambda_{h} \rVert_{0,-1/2,\Gamma}^{2} & \\
 &\lesssim_{\sigma}  \textcolor{black}{\tfrac{1}{(\Delta t)^{2}} \lVert \tilde{\mathbf{w}} - \mathbf{u} \rVert_{1,1,\Omega}^{2}}  + \tfrac{1}{(\Delta t)^{2}} \lVert \tilde{\phi} - \phi \rVert_{1,1/2,\Gamma}^{2} + \tfrac{1}{(\Delta t)^{2}} \lVert \tilde{\lambda}-\lambda \rVert_{1,-1/2,\Gamma}^{2} \ ,
 \end{flalign*}
and therefore
\begin{flalign*}
 &\lVert \mathbf{u} - \mathbf{u}_{h} \rVert_{0,1,\Omega}^{2} +\lVert \phi - \phi_{h}\rVert_{0,1/2,\Gamma}^{2} + \lVert \lambda - \lambda_{h} \rVert_{0,-1/2,\Gamma}^{2} & \\
 &\lesssim_{\sigma} (1+\frac{1}{(\Delta t)^{2}}) \Big( \lVert \tilde{\mathbf{w}} - \mathbf{u} \rVert_{1,1,\Omega}^{2}+ \lVert \tilde{\phi} - \phi \rVert_{1,1/2,\Gamma}^{2} + \lVert \tilde{\lambda} - \lambda \rVert_{1,-1/2,\Gamma}^{2} \Big) \ .
\end{flalign*}
This proves the  assertion.
\end{proof}

\section{Numerical results}\label{sec4}

\textcolor{black}{This section presents numerical results for the fluid structure-interaction problem given by \eqref{eq:FSIsolkap4}, in 3d. While finite element discretizations of fluid-structure interaction have attracted significant recent interest,   coupled  finite and boundary element procedures in the time domain are only beginning to be explored. In 2d, numerical results have been presented in  \cite{MR3614885}, based on time discretization by convolution quadrature of the boundary integral operators. A similar appooach has been demonstrated for the interaction of waves with a thermoelastic solid in 2d \cite{thermo}. The authors are not aware of any related numerical results in the mathematical literature based on time-domain Galerkin boundary element methods, as presented in this work.  On the other hand, such methods are now actively being studied for wave-wave interaction in 2d and 3d, as in \cite{ajrt} and \cite{aimi1}.} \\

In the numerical experiments for Problem \eqref{eq:wholeproblematonce}, the variational formulation \eqref{eq:FSIsolkap4} is solved, \textcolor{black}{for $\sigma=0$,} by choosing as ansatz function in the interior domain $\Omega$
\begin{equation}\label{eq:discinnansatz}
{\mathbf{u}}_{h,\triangle t}(x,t) = \sum_{k=1}^{N_t} \sum_{\nu=1}^{3} \sum_{i=1}^{\textcolor{black}{N_o}} u_{\nu,i}^{k} \beta_{\Delta t}^{k}(t) {\bf e}_{\nu} \eta_{h}^{i}(x)\ ,
\end{equation}
where $\{\eta_{h}^{i}\}$ denotes the basis of piecewise linear hat functions for $V^{1}_{h}(\Omega)$ and 
$$
 \beta^{m}_{\Delta t}(t)=(\Delta t)^{-1}((t-t_{m-1}) \gamma^{m}_{\Delta t}(t)-(t-t_{m+1})\gamma^{m+1}_{\Delta t}(t)) \ .$$
Here $\gamma^{m}_{\Delta t}(t)=H(t-t_{m-1})-H(t-t_{m})$, with $H$ the Heaviside function. The test functions are given by 
\begin{equation}\label{eq:discinntest}
{\dot{\mathbf{w}}}_{h,\Delta t} = \eta_{h}^{l}(x) \gamma_{\Delta t}^{n}(t) {\bf e}_{\mu} , 
\end{equation}
for $l=1,\ldots,\textcolor{black}{N_o}$, $ n=1,\ldots,N_t$ and $\mu=1,2,3$.

The ansatz functions on $\Gamma$ are taken as
\begin{equation}\label{eq:discboundWansatz}
 \phi_{h,\triangle t}(x,t)= \sum_{m=1}^{N_t} \sum_{i=1}^{N_{{s'}}} \varphi^{m}_{i} \beta_{\Delta t}^{m}(t) \xi_{h}^{i}(x) 
\end{equation}
and
\begin{equation}\label{eq:discboundVansatz}
\lambda_{h,\triangle t}(x,t) = \sum\limits_{m=1}^{N_t} \sum\limits_{i=1}^{N_{s'}} \lambda_{i}^{m} \beta_{\Delta t}^{m}(t)  \xi_{h}^{i}(x), 
\end{equation}
where $\{\xi_{h}^{i}\}$ denotes the basis of piecewise linear hat functions for $V^{1}_{h}$.
The corresponding test functions are chosen as
\begin{equation}\label{eq:discboundWtest}
\dot{w}_{h,\Delta t}= \gamma_{\Delta t}^{n}(t) \xi_{h}^{j}(x) \ ,
\end{equation}
\begin{equation}\label{eq:discboundVtest}
m_{h,\Delta t}=\gamma_{\Delta t}^{n}(t)\xi_{h}^{j}(x) \ ,
\end{equation}
for $1\leq n \leq N_t$ and $1\leq j \leq N_{{s'}}$. \textcolor{black}{The resulting discretization of the Poincar\'{e}-Steklov operator has been tested in \cite{contact,graded}, and corresponding results are obtained for more natural discretizations with piecewise constant $\lambda_{h,\triangle t}$. Piecewise linear and higher order test functions are considered in \cite{gs}.} 

As shown in Appendix B, this discretization of  \eqref{eq:FSIsolkap4}  leads to a time-stepping scheme, which solves a  system of the following structure in each time step $\geq 3$:
\begin{align*}
 &\begin{pmatrix}
  \frac{(\triangle t)}{2} A + \frac{1}{(\triangle t)} M & [0,n_{x}\!RI]^{T} & 0 \\ [0,-RI\!n_{x}] & -W^{0} & {K^{T}}^{0}-\frac{1}{2} \frac{(\triangle t)}{2} I \\ 0 & -K^{0} - \frac{1}{2} I & V^{0} \end{pmatrix}
 \begin{pmatrix} u^{n} \\ \varphi^{n} \\ \lambda^{n} \end{pmatrix} \\
 &= \begin{pmatrix} H^{n} + H^{n-1} - A \frac{(\triangle t)}{2} u^{n-1} + M \frac{2}{(\triangle t)} u^{n-1} - M \frac{1}{(\triangle t)} u^{n-2} +n_{x}\!RI \varphi^{n-1}\\ G^{n} +G^{n-1} + RI\!n_{x} u^{n-1}_{\Gamma}   + \sum_{m=1}^{n-1} W^{n-m} \varphi^{m} - \sum_{m=1}^{n-1} {K^{T}}^{n-m} \lambda^{m} + \frac{1}{2} \frac{(\triangle t)}{2} I \lambda^{n-1} \\  \sum_{m=1}^{n-1} K^{n-m} \varphi^{m} - \frac{1}{2} I \varphi^{n-1}- \sum_{m=1}^{n-1} V^{n-m} \lambda^{m} \end{pmatrix}.
\end{align*}
The system in the first two time steps is similar, see Appendix B. We solve this system repeatedly until our desired time step $N_t$ is reached.\\

\noindent \textbf{Example.}
Let $\Omega = [-1,1]^3$. Using the discretization described above, we compute the solutions to the discrete system \eqref{eq:FSIsolkap4}  up to time $T=4$ for  data $v^{inc}$ corresponding to the exact solution 
\begin{equation}\label{exactu}
\mathbf{u}(x,t)=\begin{pmatrix} (\sin(\pi(t-\tfrac{x_{1}}{2})))^{5} (H(-1+t-\tfrac{x_{1}}{2})-H(-3+t-\tfrac{x_{1}}{2})) \\ 0 \\ 0 \end{pmatrix}\ ,
\end{equation}
\begin{equation}\label{exactv}
v(x,t)=\tfrac{|x|-t}{2|x|} (1+\cos(\tfrac{\pi(|x|-t)}{0.9})) H(0.9-||x|-t|)\ .
\end{equation}
 We use  uniform discretizations by tetrahedra as depicted in Figure \ref{fig:meshplot} and a time step $\Delta t$ such that $\frac{\Delta t}{h} \simeq 0.1414$. We denote the  number of grid points on an edge of the cube by $n+1$. The finest mesh is then given by $n=24$ and consists of $69120$ tetrahedra, corresponding to $\Delta t = 0.01667$. The convergence of the numerical solution to the exact solution is studied as the mesh is refined, and we measure the  error in terms of the $L^2$-norm in space, resp.~space-time. \\

Figure \ref{fig:FSIat-1-1-1} shows the first component ${\bf u}_1$ of the numerical and exact solutions at the corner point ${\bf x_0}=(-1,-1,-1)$ as a function of time for $n=2, 4,8$. The behaviour of the solution in $\Omega$, resp.~$\Omega^c$, is illustrated in Figure \ref{fig:FSIthirdplot}, which plots the $L^{2}$-norms of ${\bf u}$ \textcolor{black}{in $\Omega$}, resp.~of \textcolor{black}{$v|_\Gamma = \varphi$ on $\Gamma$}, as a function of time for $n=2, 8, 24$. \textcolor{black}{These norms are approximated from the solution vectors $u^{n}$ and $\varphi^{n}$ of the discrete system using a trapezoidal rule for the integrals.} The $L^2$ error as a function of time is shown in Figure \ref{fig:FSIthirdploterror}, corresponding to the numerical solutions depicted in Figure \ref{fig:FSIthirdplot}. All plots show excellent approximation of the simple behavior of the solution  for short times and a monotonous convergence on the whole time interval. 
Note that error \textcolor{black}{in Figure \ref{fig:FSIthirdploterror}} does not seem to grow with time, as expected for a variational method. 
Figure \ref{fig:FSIthirdplotconvDOF} considers the convergence of the numerical solutions  up to $n=24$ in terms of \textcolor{black}{the mesh size $h$. It depicts the $L^2(\Omega \times [0,T])$-norm of the error in ${\bf u}$, as well as the $L^2(\Gamma \times [0,T])$-norm of the error in $\varphi$. Similar convergence rates are obtained for ${\bf u}$ and $\varphi$ in these $L^2$-norms: $0.76$ for ${\bf u}$, $0.80$ for $\varphi$. Note that based on Theorem \ref{mainthm} and the trace theorem for Sobolev spaces, one might naively expect slower convergence  in $\varphi$ than in ${\bf u}$ (by a difference of the rates $0.5$) in the norms used here. However, as shown in \cite{mpw} for time-independent FEM-BEM coupling, under mild regularity assumptions the BEM solution $\varphi$  converges at a rate $0.5$ faster than predicted from a joint estimate for $({\bf u}, \varphi)$ as in Theorem \ref{mainthm}. This exactly cancels the above difference of rates and leads to identical convergences rates for  ${\bf u}$, $\varphi$ in the space-time $L^2$-norms on $\Omega$, resp.~$\Gamma$. The identical observed convergence rates for ${\bf u}$, $\varphi$ are therefore expected and in line with those known for FEM-BEM coupling in time-independent problems \cite{mpw}.}


\begin{figure}[htbp]
 \includegraphics[width=0.32\textwidth]{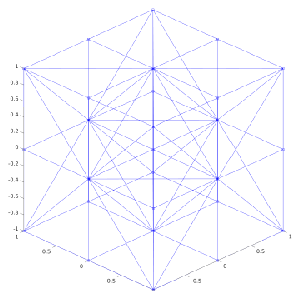}
 \includegraphics[width=0.32\textwidth]{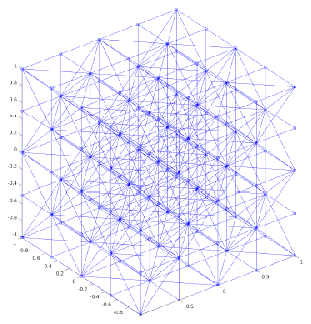}
 \includegraphics[width=0.32\textwidth]{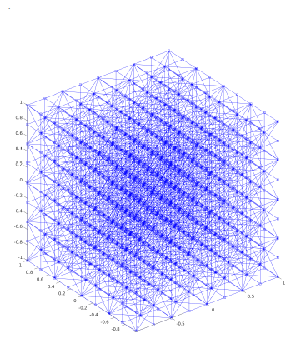}
 \caption{Meshes for $[-1,1]^3$ with 27 (n=2), 125 (n=4) and 729 (n=8) nodes.}\label{fig:meshplot}
\end{figure}
\begin{figure}[htbp]
 \includegraphics[width=\textwidth]{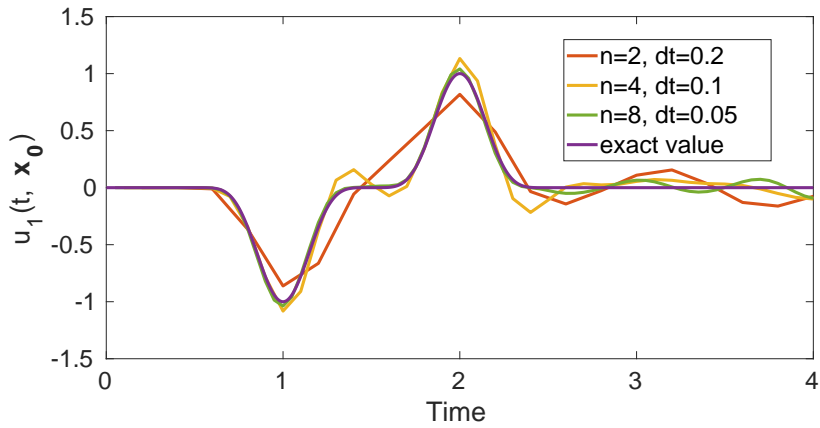}
 \caption{Numerical and exact solutions $u_1(t,\bf{x_0})$,  $\bf{x_0}=(-1,-1,-1)$.}\label{fig:FSIat-1-1-1}
\end{figure}
\begin{figure}[htbp]
 \includegraphics[width=\textwidth]{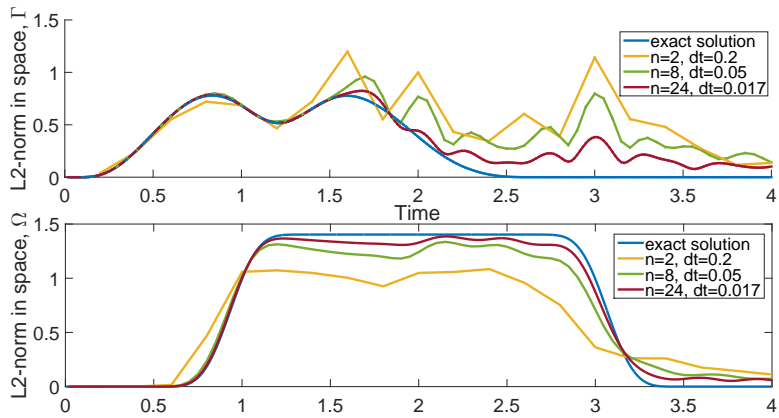}
 \caption{$L^2$-norm in space of the exact and numerical solutions for ${\bf u}$, resp.~$\varphi$.}\label{fig:FSIthirdplot}
\end{figure}

\begin{figure}[htbp]
 \includegraphics[width=\textwidth]{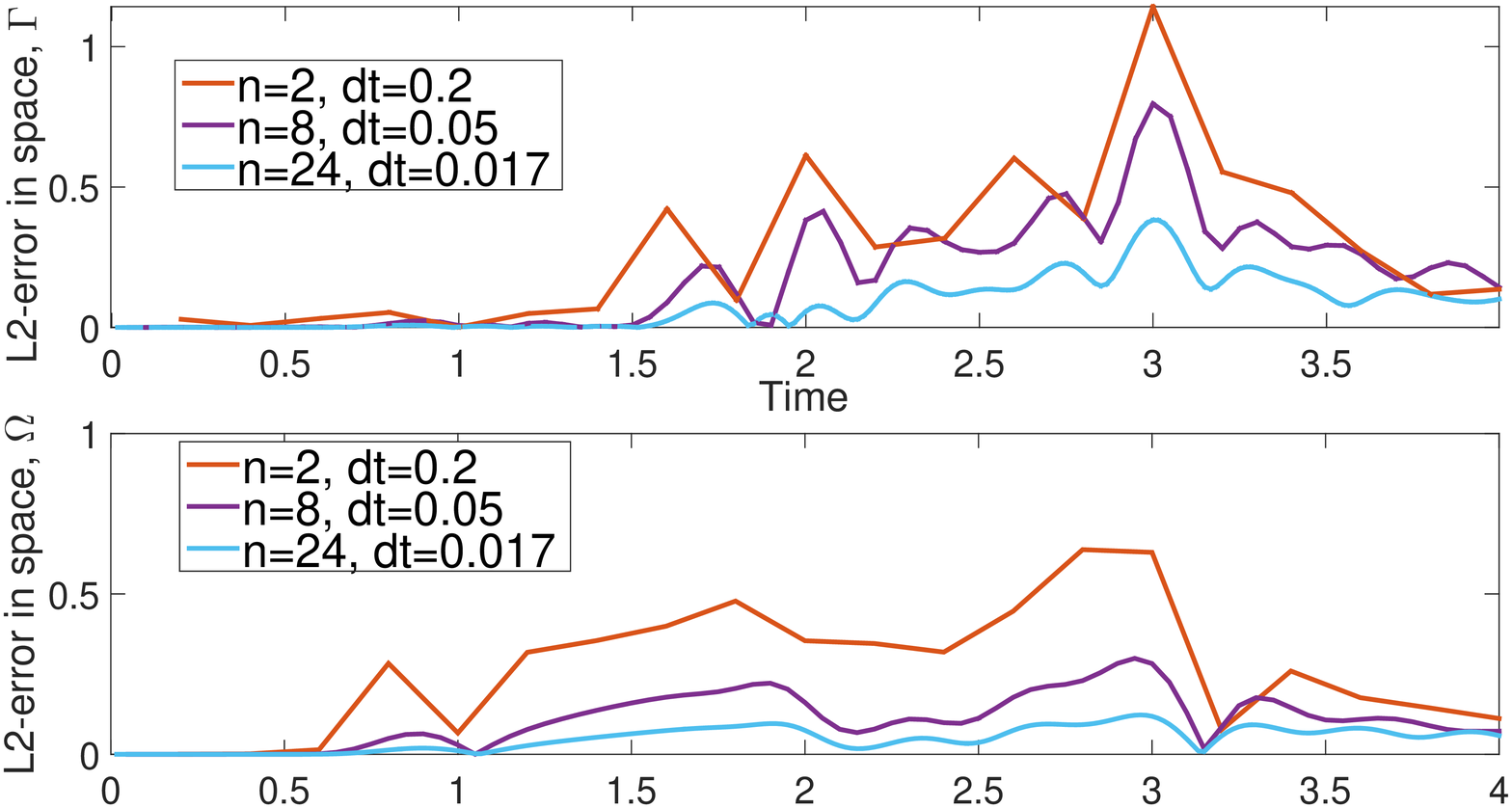}
 \caption{$L^2$-error in space for ${\bf u}$, resp.~$\varphi$.}\label{fig:FSIthirdploterror}
\end{figure}

\begin{figure}[htbp]
  \includegraphics[width=\textwidth]{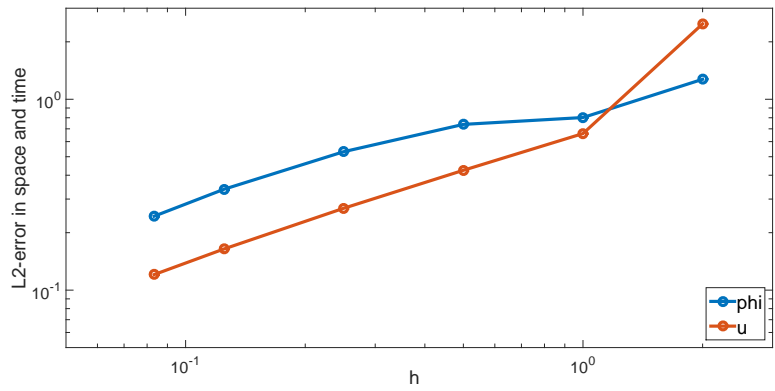}
\caption{$L^{2}$-error  in space-time for ${\bf u}$, $\varphi$ in terms of $h$.}\label{fig:FSIthirdplotconvDOF}
\end{figure}


\appendix
\section{Integral operators and finite--boundary elements}\label{SECT:retintop}

\textcolor{black}{We recall basic definitions and properties of boundary integral operators for the wave equation from \cite{MR3468871}, as well as from \cite{costabel04, MR2032873}.}

Let $\Gamma$ be the boundary of a polyhedral domain $\Omega$ in $\mathbb{R}^3$, consisting of curved, polygonal boundary faces. In $\mathbb{R}^3\backslash \Gamma$, a solution $v$ to the homogeneous wave equation may be represented in terms of the jump of the Dirichlet and Neumann data across $\Gamma$: $v = D \phi - S \lambda$. Here \textcolor{black}{for $x \in \mathbb{R}^{3}\backslash\Gamma$ and $t\geq 0$}
\begin{align}
S \lambda(x,t) &= \int_0^\infty\int_{\Gamma} G(t- \tau,x,y)\ \lambda(y,\tau)\ dy \ d\tau \ , \\
D \phi(x,t) &= \int_0^\infty\int_{\Gamma} \frac{\partial G}{\partial n_y}(t- \tau,x,y)\ \phi(y,\tau)\ dy \ d\tau\ ,
\end{align} are the single, resp.~double layer potential for the wave equation defined from the fundamental solution $G(\tau, x,y) = \frac{\delta(\tau - |x-y|)}{4\pi|x-y|}$.

The coupling method presented in this article relies on the resulting boundary integral operators \textcolor{black}{on $\Gamma \times (0,\infty)$}. \textcolor{black}{For $(x,t) \in \Gamma \times (0,\infty)$ we define }
\begin{align}\label{operators}
\nonumber V\phi(x,t)&=\int_0^\infty\int_{\Gamma} G(t- \tau,x,y)\ \phi(y,\tau)\ dy \ d\tau\ ,\\
\nonumber K\phi(x,t)&=\int_0^\infty\int_{\Gamma} \frac{\partial G}{\partial n_y}(t- \tau,x,y)\ \phi(y,\tau)\ dy \ d\tau\ ,\\
K^T \phi(x,t) = K' \phi(x,t)&= \int_0^\infty\int_{\Gamma} \frac{\partial G}{\partial n_x}(t- \tau,x,y)\ \phi(y,\tau)\ dy \ d\tau\, ,\\
\nonumber W \phi(x,t)&= \int_0^\infty\int_{\Gamma} \frac{\partial^2 G}{\partial n_x \partial n_y}(t- \tau,x,y)\ \phi(y,\tau)\ dy \ d\tau\ .
\end{align}
They are studied in space-time anisotropic Sobolev spaces $H_\sigma^{{r}}(\mathbb{R}^+,{H}^{{s}}(\Gamma))$ \cite{MR2032873}. 

To define an explicit scale of Sobolev norms, fix a partition of unity $\alpha_i$ subordinate to a covering of $\Gamma$ by open sets $B_i$ and diffeomorphisms $\phi_i$ mapping each $B_i$ into the unit cube $\subset \mathbb{R}^2$. They induce a family of norms from $\mathbb{R}^2$:
\begin{equation*}
 ||u||_{{{s}},{\Gamma}}=\left( \sum_{i=1}^p \int_{\mathbb{R}^2} (|\omega|^2+|\xi|^2)^{{s}}|\mathcal{F}\left\{(\alpha_i u)\circ \phi_i^{-1}\right\}(\xi)|^2 d\xi \right)^{\frac{1}{2}}\ .
\end{equation*}
$\mathcal{F}$ here denotes the Fourier transform. 
The norms for different $\omega \in \mathbb{C}\backslash \{0\}$ are equivalent.  

Weighted Sobolev spaces in time for $r\in\mathbb{R}$ and $\sigma>0$: are defined as
\begin{align*}
 H^{{r}}_\sigma(\mathbb{R}^+)&=\{ u \in \mathcal{D}^{'}_{+}: e^{-\sigma t} u \in \mathcal{S}^{'}_{+}  \textrm{ and }   \|u\|_{H^r_\sigma(\mathbb{R}^+)} < \infty \}\ .
\end{align*}
Here, $\mathcal{D}^{'}_{+}$ denotes the space of distributions on $\mathbb{R}$ with support in $[0,\infty)$, and  $\mathcal{S}^{'}_{+}$ the subspace of tempered distributions. The Sobolev spaces are Hilbert spaces endowed with the norm 
\begin{align*}
\|u\|_{H^r_\sigma(\mathbb{R}^+)}&=\left(\int_{-\infty+i\sigma}^{+\infty+i\sigma}|\omega|^{2r}\ |\hat{u}(\omega)|^2\ d\omega \right)^{\frac{1}{2}}\,.
\end{align*}
{The scale of space-time anisotropic Sobolev spaces \textcolor{black}{on $\Gamma$} combines the Sobolev norms in space and time:
\begin{definition}\label{sobdef}
For $r,s\in\mathbb{R}$ {and $\sigma>0$} define
\begin{align*}
 H^{{r}}_\sigma(\mathbb{R}^+,{H}^{{s}}(\Gamma))&=\{ u \in \mathcal{D}^{'}_{+}(H^{{s}}(\Gamma)): e^{-\sigma t} u \in \mathcal{S}^{'}_{+}(H^{{s}}(\Gamma))  \textrm{ and }   \|u\|_{{{r,s}},\Gamma} < \infty \}\ .
\end{align*}
$\mathcal{D}^{'}_{+}(E)$ denotes the space of distributions on $\mathbb{R}$ with support in $[0,\infty)$, taking values in $E = {H}^{{s}}({\Gamma})$, and  $\mathcal{S}^{'}_{+}(E)$ the subspace of tempered distributions. These Sobolev spaces are Hilbert spaces endowed with the norm 
\begin{align*}
\|u\|_{{{r,s}},\Gamma}&=\left(\int_{-\infty+i\sigma}^{+\infty+i\sigma}|\omega|^{2{{r}}}\ \|\hat{u}(\omega)\|^2_{{{s}},\Gamma}\ d\omega \right)^{\frac{1}{2}}\ .
\end{align*}
\end{definition}
When $|s|\leq 1$ one can show that the spaces are independent of the choice of $\alpha_i$ and $\phi_i$. 

\textcolor{black}{In a bounded Lipschitz domain $\Omega \subset \mathbb{R}^d$,} we define space-time anisotropic Sobolev spaces  \textcolor{black}{$H^{{r}}_\sigma(\mathbb{R}^+,{H}^{{s}}(\Omega))$ analogously to above}, starting from   the standard Sobolev spaces $H^s(\Omega)$ with norm $\|u\|_{s,\Omega} = \inf_{v} \left(\int_{\mathbb{R}^d} (|\omega|^2+|\xi|^2)^{{s}}|\mathcal{F}v(\xi)|^2 d\xi \right)^{\frac{1}{2}}$. Here the infimum extends over \textcolor{black}{all extensions $v \in H^s(\mathbb{R}^d)$ of $u \in H^s(\Omega)$, i.e.~all $v$ with $v|_\Omega = u$}.\\

We state the mapping properties of the boundary integral operators, see e.g.~\cite{costabel04, MR2032873}:
\begin{theorem}\label{mappingproperties}
The following operators are continuous for $r\in \mathbb{R}$, ${{\sigma>0}}$:
\begin{align*}
& V:  {H}^{r+1}_\sigma(\mathbb{R}^+, {H}^{-\frac{1}{2}}(\Gamma))\to  {H}^{r}_\sigma(\mathbb{R}^+, {H}^{\frac{1}{2}}(\Gamma)) \ ,
\\ & K':  {H}^{r+1}_\sigma(\mathbb{R}^+, {H}^{-\frac{1}{2}}(\Gamma))\to {H}^{r}_\sigma(\mathbb{R}^+, {H}^{-\frac{1}{2}}(\Gamma)) \ ,
\\ & K:  {H}^{r+1}_\sigma(\mathbb{R}^+, {H}^{\frac{1}{2}}(\Gamma))\to {H}^{r}_\sigma(\mathbb{R}^+, {H}^{\frac{1}{2}}(\Gamma)) \ ,
\\ & W:  {H}^{r+1}_\sigma(\mathbb{R}^+, {H}^{\frac{1}{2}}(\Gamma)))\to {H}^{r}_\sigma(\mathbb{R}^+, {H}^{-\frac{1}{2}}(\Gamma)) \ .
\end{align*}
\end{theorem}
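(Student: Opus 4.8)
The plan is to pass to the Fourier--Laplace transform in time. Writing $\hat u(\omega)$ for the transform of a function $u\in H^r_\sigma(\mathbb{R}^+,H^{s}(\Gamma))$, with $\omega$ on the line $\mathrm{Im}\,\omega=\sigma$, one has by definition $\|u\|_{r,s,\Gamma}^2=\int_{\mathrm{Im}\,\omega=\sigma}|\omega|^{2r}\,\|\hat u(\omega)\|^2_{s,\Gamma}\,d\omega$, so that the transform is, up to the weight $|\omega|^{r}$, an isometry onto a weighted $L^2$-space of $H^{s}(\Gamma)$-valued functions. Since the kernel $G(\tau,x,y)=\delta(\tau-|x-y|)/(4\pi|x-y|)$ transforms to the Helmholtz kernel $G_\omega(x,y)=e^{i\omega|x-y|}/(4\pi|x-y|)$, which decays exponentially because $\mathrm{Im}\,\omega=\sigma>0$, the operators $V,K,K',W$ of \eqref{operators} act fibrewise as the Helmholtz single layer operator $V_\omega$, double layer $K_\omega$, adjoint double layer $K'_\omega$, and hypersingular operator $W_\omega$. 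The theorem thus reduces to the frequency-explicit bounds
\[
\|V_\omega\|_{H^{-1/2}\to H^{1/2}}+\|K'_\omega\|_{H^{-1/2}\to H^{-1/2}}+\|K_\omega\|_{H^{1/2}\to H^{1/2}}+\|W_\omega\|_{H^{1/2}\to H^{-1/2}}\ \lesssim_\sigma\ |\omega|
\]
for $\mathrm{Im}\,\omega=\sigma$: substituting into Plancherel's identity gives, for instance, $\|V\phi\|_{r,1/2,\Gamma}^2=\int_{\mathrm{Im}\,\omega=\sigma}|\omega|^{2r}\|V_\omega\hat\phi(\omega)\|_{1/2,\Gamma}^2\,d\omega\lesssim_\sigma\int_{\mathrm{Im}\,\omega=\sigma}|\omega|^{2(r+1)}\|\hat\phi(\omega)\|^2_{-1/2,\Gamma}\,d\omega=\|\phi\|_{r+1,-1/2,\Gamma}^2$, and likewise for $K$, $K'$, $W$. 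The loss of one time derivative (the index shift $r+1\to r$) is precisely the price of the linear growth in $|\omega|$.

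For the frequency-explicit bounds I would use the energy method of Bamberger--Ha Duong. The potentials $u=S_\omega\lambda$, $w=D_\omega\phi$ solve $\Delta u+\omega^2u=0$ in $\mathbb{R}^3\setminus\Gamma$, decay exponentially at infinity, and have jumps $[u]=0$, $[\partial_n u]=-\lambda$, $[w]=\phi$, $[\partial_n w]=0$ across $\Gamma$; Green's first identity on $\Omega$ and on $\mathbb{R}^3\setminus\overline\Omega$ then expresses the transformed bilinear forms through volume integrals, e.g.\ $\langle V_\omega\lambda,\bar\lambda\rangle_\Gamma$ in terms of $\int_{\mathbb{R}^3}\big(|\nabla u|^2-\omega^2|u|^2\big)\,dx$, and similarly for $\langle W_\omega\phi,\bar\phi\rangle_\Gamma$. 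Combining these identities with the boundedness of the Dirichlet and Neumann traces $H^1(\mathbb{R}^3\setminus\Gamma)\to H^{\pm1/2}(\Gamma)$ in $\omega$-weighted norms yields the continuity bounds $|\langle V_\omega\lambda,\bar\mu\rangle_\Gamma|\lesssim_\sigma|\omega|\,\|\lambda\|_{-1/2,\Gamma}\|\mu\|_{-1/2,\Gamma}$ and $|\langle W_\omega\phi,\bar\psi\rangle_\Gamma|\lesssim_\sigma|\omega|\,\|\phi\|_{1/2,\Gamma}\|\psi\|_{1/2,\Gamma}$, which by duality give the bounds for $V_\omega$ and $W_\omega$; the off-diagonal operators $K_\omega$, $K'_\omega$ are then controlled through the jump relations, writing $K_\omega$ as the average of the two Dirichlet traces of $D_\omega\phi$ and $K'_\omega$ as the average of the two Neumann traces of $S_\omega\lambda$. (A coercivity/G\aa rding inequality of the type $\gtrsim\tfrac{\sigma}{1+|\omega|^2}\|\cdot\|^2$ follows from the same identities by testing against a time derivative, but it is not needed for the present continuity statement, only for the well-posedness of the boundary integral equations.)

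I expect the main obstacle to be exactly the last paragraph: obtaining the continuity estimates for the Helmholtz layer operators on the polyhedral Lipschitz surface $\Gamma$ with the sharp, explicit power of $|\omega|$ and controlled $\sigma$-dependence; this is the technical heart of the Bamberger--Ha Duong theory and of Costabel's treatment, which we invoke directly (see \cite{costabel04, MR2032873}). Once these bounds are in hand, the reassembly of the statement in the anisotropic Sobolev scale is a routine application of Plancherel's theorem in the time-frequency variable, and no uniformity of the constants in $\sigma$ is required, since the theorem only asserts continuity for each fixed $\sigma>0$.
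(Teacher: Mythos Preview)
The paper does not give its own proof of this theorem: it merely states the result and cites \cite{costabel04, MR2032873}. Your sketch is correct and follows precisely the standard Bamberger--Ha Duong argument contained in those references --- Fourier--Laplace transform in time, reduction to frequency-explicit bounds for the Helmholtz layer operators via Green's identities and trace estimates, and reassembly by Plancherel --- so there is nothing to compare beyond noting that you have supplied what the paper only cites.
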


\textcolor{black}{By a fundamental observation of Bamberger and Ha-Duong \cite{BamHa},} $V\partial_t$ satisfies a coercivity estimate in the norm of ${H}^{0}_\sigma(\mathbb{R}^+, {H}^{-\frac{1}{2}}(\Gamma))$, \textcolor{black}{provided $\sigma >0$}:  $\|\psi\|^2_{0,-\frac{1}{2}, \Gamma} \lesssim_\sigma \langle V \psi,  \dot{\psi}\rangle$. From the mapping properties of Theorem \ref{mappingproperties} one also has  the continuity of the bilinear form associated to $V \partial_t$ in a bigger norm: $\langle V \psi, \dot{\psi}\rangle \lesssim \|\psi\|^2_{1,-\frac{1}{2}, \Gamma}$.  Similar estimates hold for $W\partial_t$: $\|\phi\|^2_{0,\frac{1}{2}, \Gamma} \lesssim_\sigma \langle W \phi, \dot{\phi}\rangle\lesssim \|\phi\|^2_{1,\frac{1}{2}, \Gamma}$. Proofs and further information may be found in  \cite{MR2032873}.\\

\textcolor{black}{For sufficiently regular $\phi$ and $\lambda$, the following jump relations hold \cite{MR2032873}:
\begin{subequations}
 \begin{equation*}
    (S \lambda)^{-}|_{\Gamma} = (S \lambda)^{+}|_{\Gamma} = V \lambda ,  \quad \partial_n^{-}(D \phi)=\partial_n^{+}(D \phi) = W \phi , 
 \end{equation*}
   \begin{equation*}
    \partial_n^{+}(S \lambda) = (- \frac{1}{2}I + K') \lambda ,  \quad
    \partial_n^{-}(S \lambda) = (\frac{1}{2}I + K') \lambda , 
 \end{equation*}
   \begin{equation*}
    (D \phi)^{+}|_{\Gamma} = (\frac{1}{2}I + K) \phi , 
  \quad
    (D \phi)^{-}|_{\Gamma} = (-\frac{1}{2}I + K) \phi .
 \end{equation*}
\end{subequations}
}

We consider space-time discretizations based on tensor products of piecewise polynomials:

For simplicity, we assume that $\Omega$ is a polygonal domain, with a quasi-uniform triangulation $\mathcal{T}_\Omega ={\{T_1,\cdots, T_{N_o}\}}$by $\textcolor{black}{N_{o}}$ tetrahedra. The induced quasi-uniform triangulation of the  boundary $\Gamma$, $\mathcal{T}_S={\{\Delta_1,\cdots,\Delta_{N_{s'}}\}}$, should consist of closed triangular faces $\Delta_i$, such that each $\Delta_i$ is a face of one $T_j$ and at most one face of $T_j$ is contained in $\Gamma$. 

We consider the space $V_h^q(\Omega)$ of piecewise polynomial functions \textcolor{black}{on $\mathcal{T}_\Omega$} of degree $q\geq 0$ in
space \textcolor{black}{(continuous if $q\geq 1$)}. $V_h^q$ consists of traces on $\Gamma$ of functions in $V_h^q(\Omega)$.  \textcolor{black}{The parameter $h$ denotes the maximal diameter of an element in $\mathcal{T}_\Omega$.}

We choose an equidistant temporal mesh on the positive half-line $\mathcal{T}_T=\{[0,t_1),[t_1,t_2),\dots\}$, where $t_n = n (\Delta t)$. $V^p_{\Delta t}$ is the space of piecewise  polynomial  functions of degree $p$ on $\mathcal{T}_T$ (continuous and vanishing at $t=0$ if $p\geq 1$, $C^1$ if $p\geq 2$).

The space-time approximation spaces are given by tensor products of the approximation spaces in space and time, $V_h^q$ and $V^p_{\Delta t}$, associated to the space-time meshes $\mathcal{T}_{\Omega,T}=\mathcal{T}_\Omega \times\mathcal{T}_T$, respectively $\mathcal{T}_{S,T}=\mathcal{T}_S \times\mathcal{T}_T$. We write
\begin{align}\label{eq:discretspace}
V_{\Delta t,h}^{p,q}(\Omega):=  V_{\Delta t}^p \otimes V_{h}^q(\Omega),\ \ V_{\Delta t,h}^{p,q}:=  V_{\Delta t}^p \otimes V_{h}^q\ .
\end{align}

\section{Discretization and MOT-Algorithm}

This appendix discusses the details of the discretization \eqref{eq:FSIh}, 
where we set $\sigma=0$. 
\textcolor{black}{The resulting formulas for the entries of the Galerkin matrices reduce their assembly to numerical quadratures over certain light cones $E_l$ below. This structure is crucial for the practical implementation  in standard time-domain boundary element codes, see \cite{gimperleinreview, terrasse93}, as well as the recent Ph.D.~thesis \cite{phdoezdemir} of the second author.}
 
We choose the finite element ansatz and test functions as in \eqref{eq:discinnansatz} - \eqref{eq:discboundVtest}.

For the discretization of the boundary integral operators, we begin with the retarded hypersingular operator. 

We choose the ansatz function as in \eqref{eq:discboundWansatz} and the test function as in \eqref{eq:discboundWtest}. 
\begin{flalign*}
 &\langle W \phi_{h,\triangle t} , \dot{w}_{h,\Delta t} \rangle_{\Gamma \times \mathbb{R}_{+}} = \sum\limits_{m=1}^{N_t} \sum\limits_{i=1}^{N_{s'}} \varphi^{m}_{i} \Big[ - \iint\limits_{E_{n-m}} \frac{(n_x \cdot n_y) \xi_{h}^{i}(y) \xi_{h}^{j}(x)}{(\triangle t) |x-y| 4 \pi} ds_y ds_x \\ &+ 2 \iint\limits_{E_{n-m-1}} \frac{(n_x \cdot n_y) \xi_{h}^{i}(y) \xi_{h}^{j}(x)}{(\triangle t) |x-y| 4 \pi} ds_y ds_x -   \iint\limits_{E_{n-m-2}} \frac{(n_x \cdot n_y) \xi_{h}^{i}(y) \xi_{h}^{j}(x)}{(\triangle t) |x-y| 4 \pi} ds_y ds_x \Big] \\
 &+ \sum\limits_{m=1}^{N_t} \sum\limits_{i=1}^{N_{\tilde{s}}} \varphi^{m}_{i} \iint_{\Gamma \times \Gamma} \frac{(\mathop{curl}_{\Gamma} \xi_{h}^{i})(y)\cdot (\mathop{curl}_{\Gamma} \xi_{h}^{j})(x))}{4 \pi |x-y|} \mathcal{Y}^{n-m} ds_y ds_x \\ &=: \sum_{m=1}^{N_t} \sum_{i=1}^{N_{s'}} W^{n-m}_{j,i} \varphi_{i}^{m} 
:= \sum_{m=1}^{N_t} W^{n-m} \varphi^{m}\ ,
\end{flalign*}
where \cite{phdoezdemir}
\begin{align*}
 & \mathcal{Y}^{n-m}(x,y) = (2 (\triangle t))^{-1} (|x-y|^{2}-2 |x-y| (n-m+1) (\triangle t) + ((n-m+1) (\triangle t))^{2}) \chi_{E_{n-m}} \\ &+ (2 (\triangle t))^{-1} (|x-y|^{2}-2 |x-y| (n-m-2) (\triangle t) + ((n-m-2) (\triangle t))^{2}) \chi_{E_{n-m-2}} \\ &+ (2 (\triangle t))^{-1} (-2 |x-y|^{2}+2 |x-y| ( (n-m-1) (\triangle t) + (n-m) (\triangle t) ) \\ & - ( ( (n-m-1) (\triangle t))^{2} + ((n-m) (\triangle t))^{2}  ) + 2 (\triangle t)^{2} ) \chi_{E_{n-m-1}} \ .
\end{align*}
Here, for $l \in \mathbb{N}_{0}$ we define the light cone $E_l = \lbrace (x,y) \in \Gamma \times \Gamma : t_{l} \leq |x-y| \leq t_{l+1} \rbrace \subset \Gamma \times \Gamma$, and $\chi_{E_{l}}(x,y)=1$ if $(x,y)\in E_l$, and $=0$ otherwise. \\

The matrix $W^{n-m}$ is therefore a sum of integrals over the \textcolor{black}{three} light cones $E_{n-m}, E_{n-m-1}$ and $E_{n-m-2}$. 

We now consider the discretization of the single layer potential. For the ansatz function we choose \eqref{eq:discboundVansatz} 
and as test function we choose \eqref{eq:discboundVtest}. 

After some computations, we obtain 
\begin{flalign*}
& \langle V \lambda_{h,\triangle t}, \dot{m}_{h,\Delta t} \rangle_{\Gamma \times \mathbb{R}_{+}} = \int_{0}^{\infty} \int_{\Gamma} V \lambda_{h,\triangle t}(x,t) \cdot  \dot{m}_{h,\Delta t}(x,t) ds_x dt \\
&= \!\!\sum\limits_{m=1}^{N_t} \sum\limits_{i=1}^{N_{s'}} \lambda_{i}^{m} \Big[  \iint\limits_{E_{n-m}} \left(- (n-m+1) \frac{\xi_{h}^{i}(y) \xi_{h}^{j}(x)}{4 \pi |x-y|} + \frac{\xi_{h}^{i}(y) \xi_{h}^{j}(x)}{4 \pi (\triangle t)} \right) ds_y ds_x \\  &\!\!+\!\! \iint\limits_{E_{n-m-1}} \left(\!\!  (2 (n-m)-1) \frac{\xi_{h}^{i}(y) \xi_{h}^{j}(x)}{4 \pi |x-y|} - 2 \frac{\xi_{h}^{i}(y) \xi_{h}^{j}(x)}{4 \pi (\triangle t)} \right) ds_y ds_x \\ &\!\!+\!\! 
\iint\limits_{E_{n-m-2}} \left(\!\! - (n-m-2) \frac{\xi_{h}^{i}(y) \xi_{h}^{j}(x)}{4 \pi |x-y|} + \frac{\xi_{h}^{i}(y) \xi_{h}^{j}(x)}{4 \pi (\triangle t)} \right) ds_y ds_x \Big]\\
& =: \!\!\!\sum_{m=1}^{N_t} \sum_{i=1}^{N_{s'}} V_{j,i}^{n-m} \lambda_{i}^{m} \!=:\!\!\! \sum_{m=1}^{N_t} \!\! V^{n-m} \lambda^{m}. &
\end{flalign*}
We next consider the retarded adjoint double layer potential:
\begin{flalign*}
 &\langle K^{T} \lambda_{h,\triangle t} , \dot{w}_{h,\Delta t} \rangle_{\Gamma \times \mathbb{R}_{+}} = \int_{0}^{\infty} \int_{\Gamma} K^{T} \lambda_{h,\triangle t} \dot{w}_{h,\Delta t} ds_x dt &\\
 &= \sum\limits_{m=1}^{N_t} \sum\limits_{i=1}^{N_{s'}} \lambda_{i}^{m} \iint\limits_{\Gamma \times \Gamma} \frac{n_x \cdot (x-y)}{4 \pi |x-y|^{3}} \xi_{h}^{i}(y) \xi_{h}^{j}(x) \mathcal{Y}^{n-m}(x,y) ds_y ds_x &\\
 &+ \sum\limits_{m=1}^{N_t} \sum\limits_{i=1}^{N_{s'}} \lambda_{i}^{m} \Big[  \iint\limits_{E_{n-m}} n_x \cdot (x-y) \left( (n-m+1) \frac{\xi_{h}^{i}(y) \xi_{h}^{j}(x)}{4 \pi |x-y|^{2}} - \frac{\xi_{h}^{i}(y) \xi_{h}^{j}(x)}{4 \pi (\triangle t) |x-y|} \right) ds_y ds_x & \\  &+ \iint\limits_{E_{n-m-1}} n_x \cdot (x-y) \left( - (2 (n-m)-1) \frac{\xi_{h}^{i}(y) \xi_{h}^{j}(x)}{4 \pi |x-y|^{2}} + 2 \frac{\xi_{h}^{i}(y) \xi_{h}^{j}(x)}{4 \pi (\triangle t) |x-y|} \right) ds_y ds_x & \\ &+ 
\iint\limits_{E_{n-m-2}} n_x \cdot (x-y) \left( (n-m-2) \frac{\xi_{h}^{i}(y) \xi_{h}^{j}(x)}{4 \pi |x-y|^{2}} - \frac{\xi_{h}^{i}(y) \xi_{h}^{j}(x)}{4 \pi (\triangle t) |x-y|} \right) ds_y ds_x \Big] &
\\ &= \sum_{m=1}^{N_t} \sum_{i=1}^{N_{s'}} ({K^{T}})^{n-m}_{j,i} \lambda_{i}^{m} = \sum_{m=1}^{N_t} {K^{T}}^{n-m} \lambda^{m} \ .&
\end{flalign*}
The related term for the mass matrix is given by
\begin{flalign*}
&{\textstyle \langle \frac{1}{2}}  \lambda_{h,\triangle t} , \dot{w}_{h,\Delta t} \rangle_{\Gamma \times \mathbb{R}_{+}} 
= \frac{1}{2} \int\limits_{0}^{\infty} \int_{\Gamma} \sum\limits_{m=1}^{N_t} \sum\limits_{i=1}^{N_{s'}} \lambda_{i}^{m} \beta_{\Delta t}^{m}(t) \xi_{h}^{i}(x) {\gamma}_{\Delta t}^{n}(t) \xi_{h}^{j}(x) ds_x dt &\\
&= \frac{1}{2} \sum_{m=1}^{N_t} \sum_{i=1}^{N_{s'}} \lambda_{i}^{m} (\int_{\Gamma} \xi_{h}^{i}(x) \xi_{h}^{j}(x) ds_x) (\int_{0}^{\infty} \beta_{\Delta t}^{m}(t) {\gamma}_{\Delta t}^{n} dt) &\\ &= \frac{1}{2}  \sum_{i=1}^{N_{s'}} \lambda_{i}^{m} (\int_{\Gamma} \xi_{h}^{i}(x) \xi_{h}^{j}(x) ds_x) \frac{(\triangle t)}{2} \begin{cases} \lambda_{i}^{1} & , n=1 \\ \lambda_{i}^{n}+\lambda_{i}^{n-1} & , n \geq 2 \end{cases} &\\
&=: \frac{1}{2} \sum_{i=1}^{N_{s'}} I_{j,i} \lambda_{I}
= \frac{1}{2} I \lambda_I  \ ,&
\end{flalign*}
where 
\begin{equation*}
  \lambda_{I}= \frac{(\triangle t)}{2} \begin{cases} \lambda^{1} & , n=1 \\
	  \lambda^{n} + \lambda^{n-1} & , n \geq 2 \ \ . \end{cases} 
\end{equation*}
Furthermore 
\begin{flalign*}
 &\langle K \phi_{h,\triangle t} , \dot{m} \rangle_{\Gamma \times \mathbb{R}_{+}} = \int_{0}^{\infty} \int_{\Gamma} K \phi_{h,\triangle t} \dot{m}_{h,\Delta t} ds_x dt \\
 &= \sum\limits_{m=1}^{N_t} \sum\limits_{i=1}^{N_{s'}} \varphi_{i}^{m} 
 \Big[  \!\!\iint\limits_{E_{n-m}} \!\!n_y \!\cdot\! (x-y) \left( - (n-m+1) \frac{\xi_{h}^{i}(y) \xi_{h}^{j}(x)}{4 \pi |x-y|^{3}} \!+\! \frac{\xi_{h}^{i}(y) \xi_{h}^{j}(x)}{4 \pi (\triangle t) |x-y|^{2}} \right) ds_y ds_x \\ &+ \iint\limits_{E_{n-m-1}} n_y \cdot (x-y) \left( (2 (n-m)-1) \frac{\xi_{h}^{i}(y) \xi_{h}^{j}(x)}{4 \pi |x-y|^{3}} - 2 \frac{\xi_{h}^{i}(y) \xi_{h}^{j}(x)}{4 \pi (\triangle t) |x-y|^{2}} \right) ds_y ds_x \\ &+ 
\iint\limits_{E_{n-m-2}} n_y \cdot (x-y) \left( - (n-m-2) \frac{\xi_{h}^{i}(y) \xi_{h}^{j}(x)}{4 \pi |x-y|^{3}} + \frac{\xi_{h}^{i}(y) \xi_{h}^{j}(x)}{4 \pi (\triangle t) |x-y|^{2}} \right) ds_y ds_x \Big] \\  &+ \!\!\sum\limits_{m=1}^{N_t} \!\sum\limits_{i=1}^{N_{s'}} \!\!\varphi_{i}^{m} 
\Big[ \!\!\iint\limits_{E_{n-m}} \!\!\!\! \frac{-n_y \!\cdot\! (x-y)}{4 \pi (\!\triangle t\!) |x\!-\!y|^{2}} \xi_{h}^{i}(\!y\!) \xi_{h}^{j}(\!x\!) ds_y ds_x \!\!+\!\!\!\!\!\!\!\iint\limits_{E_{n-m-1}} \!\!\!\!\!\!\frac{2 n_y \!\cdot\!(x-y)}{4 \pi (\!\triangle t\!) |x\!-\!y|^{2}} \xi_{h}^{i}(\!y\!) \xi_{h}^{j}(\!x\!) ds_y ds_x & \\ &+ \iint\limits_{E_{n-m-2}} - \frac{n_y \cdot (x-y)}{4 \pi (\!\triangle t\!) |x-y|^{2}} \xi_{h}^{i}(y) \xi_{h}^{j}(x) ds_y ds_x  \Big]
= \sum_{m=1}^{N_t} \sum_{i=1}^{N_{s'}} {K}^{n-m}_{j,i} \varphi_{i}^{m} = \sum_{m=1}^{N_t} {K}^{n-m} \varphi^{m}
\end{flalign*}
and
\begin{flalign*}
&\langle {\textstyle \frac{1}{2}} \phi_{h,\triangle t} , \dot{m}_{h,\Delta t} \rangle_{\Gamma \times \mathbb{R}_{+}} 
= \frac{1}{2} \int_{0}^{\infty} \int_{\Gamma} \sum\limits_{m=1}^{N_t} \sum\limits_{i=1}^{N_{s'}} \varphi_{i}^{m} \beta_{\Delta t}^{m}(t) \xi_{h}^{i}(x) \dot{\gamma}_{\Delta t}^{n}(t) \xi_{h}^{j}(x) ds_x dt &\\
&= \frac{1}{2} \sum_{i=1}^{N_{s'}} \varphi_{i}^{m} (\int_{\Gamma} \xi_{h}^{i}(x) \xi_{h}^{j}(x) ds_x) (\int_{0}^{\infty} \beta_{\Delta t}^{m}(t) \dot{\gamma}_{\Delta t}^{n} dt) &\\ &= \frac{1}{2} \sum_{m=1}^{N_t} \sum_{i=1}^{N_{s'}} \varphi_{i}^{m} (\int_{\Gamma} \xi_{h}^{i}(x) \xi_{h}^{j}(x) ds_x) \begin{cases} - \varphi_{i}^{1} & , n=1 \\ - (\varphi_{i}^{n}-\varphi_{i}^{n-1}) & , n \geq 2 \end{cases} &\\
&= \frac{1}{2} \sum_{i=1}^{N_{s'}} I_{j,i} \varphi_{I}
= \frac{1}{2} I \varphi_{I}  \ ,
\end{flalign*}
with 
\begin{equation*}
  \varphi_{I}= \begin{cases} - \varphi^{1} & , n=1 \\
	  - \varphi^{n} + \varphi^{n-1} & , n \geq 2 \ \ . \end{cases}
\end{equation*}It remains to consider the coupling contributions. For $j=1,\ldots,N_{s'}$ 
\begin{flalign*}
 &\langle \dot{\mathbf{{u}}}_{h,\triangle t}\textcolor{black}{|_{\Gamma}} \cdot n ,  \dot{w}_{h,\Delta t} \rangle_{\Gamma \times \mathbb{R}_{+}} =  \int_{0}^{\infty} \int_{\Gamma} 
  \dot{\mathbf{{u}}}_{h,\triangle t}\textcolor{black}{|_{\Gamma}} \cdot n_{\textcolor{black}{x}} \dot{w}_{h,\Delta t} ds_x dt  & \\ &= 
  \sum_{\nu=1}^{3} \sum_{i=1}^{N_o} (\int_{\Gamma} \eta_{h}^{i}|_{\Gamma}(x) \vec{e}_{\nu} \cdot n_{\textcolor{black}{x}} \ \xi_{h}^{j}(x) ds_x ) \begin{cases} u_{\nu,i}^{1} & , n=1 \\ u_{\nu,i}^{n}-u_{\nu,i}^{n-1} & , n \geq 2   \end{cases} & \\
&=: \sum_{\nu=1}^{3} \sum_{i=1}^{N_o} (RI\!n_{x})_{(i,\nu),j} u_{T}
= RI\!n_{x} u_{T} \ ,&
\end{flalign*}
with 
\begin{equation*}
 u_{T} = \begin{cases} u^{1}_{\Gamma} & , n=1 \\ 
  u^{n}_{\Gamma}-u^{n-1}_{\Gamma} & , n \geq 2 \ \ .\end{cases}
\end{equation*}
For the second coupling term:
\begin{flalign*}
 \langle \dot{\phi}_{h,\triangle t} \cdot n , \dot{\mathbf{w}}_{h,\Delta t}|_\Gamma \rangle_{\Gamma \times \mathbb{R}_{+}} &=  \int_{0}^{\infty} \int_{\Gamma} \dot{\phi}_{h,\triangle t} \cdot n_{\textcolor{black}{x}} \dot{\mathbf{w}}_{h,\Delta t}|_\Gamma ds_x dt \\
 &= \sum_{i=1}^{N_{s'}} (\int_{\Gamma} \xi_{h}^{i}(x) n_{\textcolor{black}{x}} \cdot \eta_{h}^{j}|_{\Gamma}(x) \vec{e}_{\mu} ds_x) \begin{cases} \varphi_{i}^{1} & , n=1 \\ \varphi_{i}^{n}-\varphi_{i}^{n-1} & , n \geq 2  \end{cases} \\
&=: \sum_{i=1}^{N_{s'}} (n_{x}\!RI)_{i,(j,\eta)} \varphi_{T}
= n_{x}\!RI \varphi_{T}
\end{flalign*}
with 
\begin{equation*}
\varphi_{T} = \begin{cases} \varphi^{1} & , n=1 \\ 
  \varphi^{n}-\varphi^{n-1} & , n \geq 2 \ \ . \end{cases}
\end{equation*}
For completeness we mention the right hand side: Set $h=\dot{v}^{inc} n $ and \textcolor{black}{$g={\partial_{n}^{+}v^{inc}}$}. 
We approximate the time integral by the trapezoidal rule, so that:
\begin{flalign*}
&- \langle \dot{v}^{inc}n, {\dot{\mathbf{{w}}}|_\Gamma}\rangle_{\Gamma \times \mathbb{R}_{+}} = - \langle h , {\dot{\mathbf{{w}}}|_\Gamma} \rangle_{\Gamma \times \mathbb{R}_{+}}
= - \frac{(\triangle t)}{2} \int_{\Gamma} (h^{n}\!+\!h^{n-1}) \eta_{h}^{j}(x) e_{\mu} ds_x 
=: H^{n} + H^{n-1} \\
& \langle \textcolor{black}{\textstyle {\partial_n^{+}}v^{inc}}, \dot{w}\rangle_{\Gamma} = \langle g , \dot{w} \rangle_{\Gamma}
= \frac{(\triangle t)}{2} \int_{\Gamma} (g^{n}+g^{n-1}) \xi_{h}^{j}(x) ds_x 
=: G^{n} + G^{n-1}\ ,
\end{flalign*}
where $h^{n}=h(x,t_{n})$ and $g^{n}=g(x,t_{n})$.\\

\textcolor{black}{Defining 
 $$u_{A} = (\triangle t) \cdot \begin{cases} \frac{1}{2} u^{1} & , n=1 \\ \frac{u^{n}+u^{n-1}}{2} & , n \geq 2 \end{cases} \ \  \text{ and  }\ \ u_M=\frac{1}{(\triangle t)} \cdot \begin{cases}  u^{1} & , n=1 \\ u^{2}-2 u^{1} & , n=2 \\ u^{n}-2 u^{n-1}+u^{n-2} & , n\geq 3\end{cases} \ ,$$ }
the resulting system of equations therefore becomes 
\begin{align}
&A u_A + M u_M -RI\!n_{x} u_T + n_{x}\!RI \varphi_{T} - \sum_{m=1}^{N_t} W^{n-m} \varphi^{m} + \sum_{m=1}^{N_t} {K^{T}}^{n-m} \lambda^{m} 
-\frac{1}{2} I \lambda_{I} \nonumber \\ &+ \frac{1}{2} I \varphi_{I} - \sum_{m=1}^{N_t} K^{n-m} \varphi^{m} + \sum_{m=1}^{N_t} V^{n-m} \lambda^{m} = H^{n}+H^{n-1}+G^{n}+G^{n-1} \ . \label{eqtosolve}
\end{align}
The matrices $W^{k},K^{k},{K^{T}}^{k},V^{k}$ vanish if the index $k$ is negative. Therefore we get for \eqref{eqtosolve}
in the first time step $(n=1)$:
\begin{align*}
 \begin{pmatrix}
  \frac{(\triangle t)}{2} A + \frac{1}{(\triangle t)} M & [0, n_{x}\!RI]^{T} & 0 \\ [0,-RI\!n_{x}] & -W^{0} & {K^{T}}^{0}-\frac{1}{2} \frac{(\triangle t)}{2} I \\ 0 & -K^{0} - \frac{1}{2} I & V^{0} \end{pmatrix}
 \begin{pmatrix} u^{1} \\ \varphi^{1} \\ \lambda^{1} \end{pmatrix}
 = \begin{pmatrix} H^{1} + H^{0} \\ G^{1} +G^{0} \\ 0 \end{pmatrix}\ .
\end{align*}
\textcolor{black}{Note the zero block in $[0, -RIn_{x}]$, as $\mathbf{u}_{h,\Delta t}|_\Gamma$ only depends on the values in the nodes  on the boundary $\Gamma$. Similarly, one obtains a zero block in $[0, n_{x}RI]^{T}$, corresponding to the vanishing contribution of  nodes in the interior of $\Omega$ to the trace of the test function $\mathbf{w}_{h,\Delta t}|_\Gamma$.} 

For the second time step $(n=2)$ we obtain
\begin{align*}
 \!\begin{pmatrix}
  \!\frac{(\triangle t)}{2} A \!+\!\! \frac{1}{(\triangle t)} M & [0,n_{x}\!RI]^{T} & 0 \\ [0,-RI\!n_{x}] & -W^{0} & {K^{T}}^{0}\!\!\!-\!\! \frac{(\!\triangle t\!)}{4} I \\ 0 & -K^{0} \!-\! \frac{1}{2} I \!\!\! & V^{0} \end{pmatrix} \!
 \begin{pmatrix} \!\!u^{2} \! \\ \!\!\varphi^{2} \! \\ \!\!\lambda^{2} \! \end{pmatrix} \!
 =\! \begin{pmatrix} \!\!H^{2} \!+\! H^{1} \!\!-\! A \frac{(\triangle t)}{2} u^{1} \!+\! M \frac{2}{(\triangle t)} u^{1} \!+\! n_{x}\!RI \varphi^{1}  \\ \!\!G^{2} \!\!+\!G^{1}\!\!+\!\!RI\!n_{x} u^{1}_{\Gamma} \!+\! W \!\varphi^{1} \!\!+\!{K^{T}}^{1} \!\lambda^{1} \!\!+\!\!\frac{(\triangle t)}{4} I \lambda^{\!1} \!\! \\ K^{1} \varphi^{1} \!-\! \frac{1}{2} I \varphi^{1} - V^{1} \lambda^{1} \end{pmatrix}\!,
\end{align*}
using $u^{1},\varphi^{1}$ and $\lambda^{1}$ from above.  For later time steps  $n \geq 3$ we conclude:
\begin{align*}
 &\begin{pmatrix}
  \frac{(\triangle t)}{2} A + \frac{1}{(\triangle t)} M & [0,n_{x}\!RI]^{T} & 0 \\ [0,-RI\!n_{x}] & -W^{0} & {K^{T}}^{0}-\frac{1}{2} \frac{(\triangle t)}{2} I \\ 0 & -K^{0} - \frac{1}{2} I & V^{0} \end{pmatrix}
 \begin{pmatrix} u^{n} \\ \varphi^{n} \\ \lambda^{n} \end{pmatrix} \\
 &= \begin{pmatrix} H^{n} + H^{n-1} - A \frac{(\triangle t)}{2} u^{n-1} + M \frac{2}{(\triangle t)} u^{n-1} - M \frac{1}{(\triangle t)} u^{n-2} +n_{x}\!RI \varphi^{n-1}\\ G^{n} +G^{n-1} + RI\!n_{x} u^{n-1}_{\Gamma}   + \sum_{m=1}^{n-1} W^{n-m} \varphi^{m} - \sum_{m=1}^{n-1} {K^{T}}^{n-m} \lambda^{m} + \frac{1}{2} \frac{(\triangle t)}{2} I \lambda^{n-1} \\  \sum_{m=1}^{n-1} K^{n-m} \varphi^{m} - \frac{1}{2} I \varphi^{n-1}- \sum_{m=1}^{n-1} V^{n-m} \lambda^{m} \end{pmatrix}.
\end{align*}
This system is solved repeatedly until reaching time step $N_t \geq 3$.

\end{document}